\newtheorem{theorem}{\noindent Theorem}
\newtheorem{lemma}{\noindent Lemma}
\newtheorem{definition}{\noindent Definition}
\newtheorem{corollary}{\noindent Corollary}
\newtheorem{statement}{\noindent Proposition}
\date{}
\vershik\url{vershik@pdmi.ras.ru}
\author{A.~M.~Vershik\thanks{St.~Petersburg
Department of Steklov Institute of Mathematics.
 Email: \vershik. Partially supported by the RFBR grant 11-01-00677-a.}}
\date{}
\title{The Pascal automorphism has a continuous spectrum}
\begin{document}

\maketitle

\rightline{\it \textbf{In memory of V.~I.~Arnold, a troublemaker}}
\bigskip
\bigskip
\begin{flushright}
\parbox{12cm}
{Mathematicians who are only mathematicians have exact minds, provided
all things are explained to them by means of definitions and axioms;
otherwise they are inaccurate and insufferable, for they are only
right when the principles are quite clear.}

\vskip2mm
Blaise Pascal, {\it Thoughts}, English translation by W.~F.~Trotter
\end{flushright}
 \bigskip
 \bigskip

\noindent\textbf{Dedication.}
Dima Arnold (as well as me) was very fond of B.~Pascal, and disliked
R.~Descartes, seeing him as a forerunner of Bourbakism he
hated so much. As to me, in my youth I had great respect for Bourbaki,
a very high opinion of his 5th volume, and even once wrote to him (N.~Bourbaki) a
long eulogistic letter, of which Dima did not approve.
In reply, N.~Bourbaki (impersonated by J.~Dieudonn\'e) presented me the
next volume {\it Integration}, which had just appeared; the
topic was close to my interests, but the volume turned out to
be a failure. I was distressed and started to believe that perhaps
Arnold was right.

The keen interest to combinatorics and asymptotic problems, which appeared
in the last years of V.~I.~Arnold's life and made us even closer, was, I
believe, another manifestation of the fact that his mind revolted at
any limits and
prohibitions, he always violated canons, or, better to say,
introduced new canons; and he was able to do this, because he was
(according to Pascal) not only a mathematician.

\begin{abstract}
In this paper, we describe the Pascal automorphism and present a sketch
of the proof that its spectrum is continuous in  the orthogonal
complement of the constants.
\end{abstract}

\section{Introduction: the definition of the adic Pascal automorphism}

The transformations generated by classical graded graphs, such as the
ordinary and multidimensional Pascal graphs, the Young graph, the
graph of walks in Weyl chambers, etc., provide examples of
combinatorial origin of the new, very interesting class of {\it adic
transformations}, introduced back in \cite{V81}.

In this paper, we study the Pascal automorphism. It is a natural
transformation in the path space of the Pascal graph (= the
infinite-dimensional Pascal triangle), i.e., in the infinite-dimensional
cube. If we realize this automorphism as a shift in the space of
sequences of zeros and ones, then a stationary measure arises, which
was called the Pascal measure; we study the properties of this measure.
In particular, it turns out that the set of Besicovitch--Hamming
almost periodic sequences has zero Pascal measure, which
eventually implies the continuity at least of the spectrum of the
corresponding operator.

A crucial role in the sketched proof of the continuity of a part of
the spectrum of the Pascal automorphism is played by combinatorial
considerations related to the structure of repeated occurrences of growing
self-similar words. These considerations are universal for a
wide class of adic automorphisms. Our exposition follows the tradition
maintained by V.~I.~Arnold (see, e.g.,
\cite{A, Ar}), which is to reveal the nature of a
phenomenon rather than to formally describe it.

\subsection{A linear order and other structures on the set of vertices
of the cube}

The definition of the Pascal automorphism, which is an example of an adic
transformation, was given in
\cite{V81,V82}; in the same papers, the problem of finding
its spectrum was suggested. The definition is based on introducing a
lexicographic (linear) order on the set of paths of the finite or
infinite Pascal graph.\footnote{The term ``Pascal triangle'' is more
commonly used, but it applies mainly to the finite object; at the
vertices of this graph, one usually writes the binomial coefficients. Thus it
is not quite natural to call the infinite Pascal graph a triangle. But
there are also objections to using the term ``Pascal triangle'' for
the finite graph: first of all, because the discovery of this graph is
attributed also to the Indian Pingala (10th century), the Persian Omar
Khayyam (12th century), and the Chinese Yang Hui (13th century). On
the other hand, for to query "Pascal graph" Google largely shows results 
related to Count ("Graf") L.N.Tolstoy who loves Pascal very much and to graphic 
in the Pascal programming language.}
In turn, introducing this lexicographic order reduces to
introducing a natural linear order on the set of all subsets of given
cardinality in the set of $n$ elements, or, in the geometric language,
on the set of vertices of the
$n$-dimensional cube with a given number of nonzero
coordinates. We begin with several versions of the definition of this order.

\medskip
\textbf{1}. Let $I_n=\{0,1\}^n$ be the set of vertices of the unit
$n$-dimensional cube. Consider the hyperplanes that contain vertices
with the sum of coordinates equal to
$k$, where $k=0,1,\dots, n$, and denote by $C_{n,k}$ the set of vertices on
the $k$th hyperplane; these vertices have $m=n-k$ coordinates equal to $0$
and $k=n-m$ coordinates equal to $1$. We define a linear order on all
$C_{n,k}$ by induction. For $n=2$, the order on the one-point sets
$C_{2,0}, C_{2,2}$ is trivial, and on $C_{2,1}$ it is defined as follows:
$(0,1)\succ (1,0)$. Now assume that the order is defined on
$C_{n,k}$ for all $k=0,1, \dots, n$; then on
$C_{n+1,k}$ we define it by the following rule. Given a pair of
vertices with the same last coordinate, they are ordered in the same way
as the pair of
vertices from $C_{n,k}$ obtained from them by deleting the common last coordinate;
if the last coordinates are different, then the greater vertex is the
one for which it is equal to $1$.
With this order, the smallest vertex in
$C_{m+k,k}$ is $\smash[b]{(\,\underbrace{1,\dots,1}_k\,,\,
\underbrace{0,\dots,0}_m\,)}$, and the greatest one is $(\,\underbrace{0,\dots,0}_m\,,\,
\underbrace{1,\dots,1}_k\,)$.

\smallskip
\textbf{2}. The same definition in slightly different terms reads as
follows. Setting  $k=n-m$, consider the family
$C_{n,k}$ of all $C_{n}^k$ subsets of cardinality $k$ in the linearly
ordered set of  $m+k$ elements
$\{\textbf{1,\dots, k, k+1, \dots, k+m}\}$, and introduce a
linear lexicographic order on this set by the following rule: a subset $F$ is greater than a subset $G$
if the maximum index of an element from $F$ that does not belong to
$G$ is greater that the maximum index of an element from $G$ that
does not belong to $F$.
Thus we have linearly ordered all elements of $C_{n,k}$.

\smallskip
\textbf{3}. Finally, the ``numerical'' interpretation of this order is
as follows. A vertex of a finite-dimensional unit cube (with
ordered coordinates) is a $0$-$1$ vector, which we regard
as the binary expansion of an integer.
Consider the set of all integers with a given (equal) number of zeros (and ones) in
the binary expansion, and introduce the standard order $<$ on this set
regarded just as a set of
positive integers.

\smallskip
The equivalence of all three independent definitions of the
lexicographic order is
easy to verify.

Since a path in the finite Pascal triangle of height $n$ can be
identified with a $0$-$1$ sequence, we have in fact linearly ordered
each set of paths leading to a finite vertex  of the triangle.

\begin{definition}{\rm
Given positive integers $m$, $k$, we define the \textit{supporting word $O(m,k)$}
as the $0$-$1$-vector consisting from the first coordinates of the ordered
sequence of vertices of the set $C_{m+k,m}$; equivalently,
$O(m,k)$ is the vector of parities of the
set of positive integers whose binary expansion contains exactly $k$
ones and $m$ zeros written in increasing order. By
$O(0,0)$ we understand the word
$(1)$. The length of the vector $O(m,k)$ is equal to $C_{m+k}^m$.}
\end{definition}

Example:
\begin{gather*}
O(1,1)=(1,0),\qquad O(2,2)=(1,1,0,1,0,0),\\
O(4,3)=(1,1,1,0,1,1,0,1,0,0,1,1,0,1,0,0,1,0,0,0,\kern100pt\\
\kern190pt1,1,0,1,0,0,1,0,0,0,1,0,0,0,0).
\end{gather*}

\subsection{The infinite-dimensional cube and the Pascal automorphism}

Now consider the set of vertices of the infinite-dimensional unit
cube, i.e., the
countable product of two-point sets:
$I^{\infty}=\{0,1\}^{\infty}$; in short, we will call it the
infinite-dimensional cube. On this remarkable object, there is a huge number of important
mathematical structures, which have many useful interpretations and
various applications.

First of all, we regard the infinite-dimensional cube as the compact
additive group of dyadic integers
$\textbf{Z}_2$. We realize it as the group of sequences of residues
modulo $2^n$ and use the additive notation. In our interpretation, a
dyadic integer is also an infinite one-sided sequence of zeros and
ones. The weak topology on $I^\infty$ gives rise to the structure of a
standard measure space. The Bernoulli measure $\mu$ is the infinite
product of the measures with probabilities $(1/2,1/2)$ on the factors
$\{0,1\}$. It is simultaneously the
Haar measure on the cube regarded as the group
$\textbf{Z}_2$. One can also consider other Bernoulli measures
$\mu_p$, each of which is the countable product of equal factors
$(p,1-p)$, where $0<p<1$; they are no
longer invariant (only quasi-invariant) under the addition (but are
invariant under the Pascal automorphism, see below). The
infinite-dimensional cube can be naturally identified with the set of
infinite paths in the Pascal graph (a zero corresponds to choosing the
left direction, and a one, to choosing the right direction). It is
this space that will be the phase space of the  Pascal automorphism
defined below. All these interpretations are identical, and the
measure is the same; the choice of a convenient realization is a
matter of taste. For us, it is usually convenient to use the
infinite-dimensional cube; the topology of the space is not of
importance.

It is convenient to write dyadic integers in the form
$$\underbrace{0,{\ldots} ,0}_{m_1}\underbrace{1,{\ldots} ,1}_{k_1}**= 0^{m_1}1^{k_1}** .$$

Obviously, the translation $T$ on the additive group
$\textbf{Z}_2$ defined by the formula
$Tx=x+1$ preserves the Haar--Bernoulli measure; in
dynamical systems, it is called the {\it odometer}, or the
{\it dyadic automorphism}. This is one of the simplest ergodic
automorphisms; its spectrum (= the set of eigenvalues) is the group of
all roots of unity of order
$2^n$, $n=1,2, \dots$. The orbits of the odometer are the cosets of the
(dense) subgroup ${\Bbb Z} \subset \textbf{Z}_2$. The general adic
model of measure-preserving transformations is a far-reaching
generalization of the odometer (see \cite{V81,V82}).

Consider the dyadic metric $\rho$ on the additive group $\textbf{Z}_2$
of dyadic integers; it induces the weak topology on
$I^{\infty} $.  The metric $\rho$ looks as follows:
$\rho(x,y)=\|x-x'\|$, where $\|g\|=2^{-t(g)}$ is the {\it
canonical normalization}; here
$t(g)$ is the index of the first nonzero coordinate of $g$. The metric
$\rho$ is obviously invariant with respect to the odometer; however, as we
will see, it is not invariant with respect to the Pascal automorphism.

Now we introduce an order on
$\textbf{Z}_2 \sim I^{\infty}=\{0,1\}^{\infty}$ as follows. We say
that two vertices (points) of the infinite-dimensional cube are
comparable if their coordinates coincide from some index on
(i.e., they have ``the same tail'') and the number of ones among the
coordinates with smaller
indices in both sequences is the same. Given two comparable
sequences, the greater one is, by definition, the sequence whose initial
finite segment is greater in the sense of the order defined above.
This is the lexicographic order on the infinite-dimensional cube we
need; denote it by $\prec$. One can easily extend all
descriptions of the order on the finite-dimensional cube given above
to the infinite-dimensional cube.

But every vertex of the infinite-dimensional cube can be regarded as
an infinite path in the Pascal graph. In the order we have introduced, paths are comparable
if they have the same ``tail,'' i.e., their coordinates coincide from
some index on. Thus we have defined a linear order on the set of paths in
the Pascal graph.

\begin{definition}
{\rm The order introduced on the set of vertices of the (finite- or
infinite-dimensional) cube will be called the
\textit{adic order}.}
\end{definition}

The order type of the class of comparable paths is that of the
one-sided ($\Bbb N$) or two-sided ($\Bbb Z$) infinite
chain; it is infinite to the left if the corresponding vertex of the
cube has only finitely many zeros, and infinite to the right if it has
finitely many ones. For all other points (paths), the order type is
that of $\Bbb Z$; they constitute a set of full Bernoulli measure. One may say
that we have redefined the order on the cosets of the subgroup $\Bbb
Z$; each coset breaks into countably many linearly ordered subsets.

\begin{definition}[\cite{V82}]
{\rm The {\it Pascal automorphism}  is the map $P$ from the
infinite-dimensional cube (in any interpretation) to itself that sends
every point to its immediate successor in the sense of the adic order
($\prec$).}
\end{definition}

The immediate successor, as well as
the immediate predecessor, exists for all points except for countably many.
Thus the Pascal automorphism and its inverse are defined everywhere
except for countably many points (more exactly, except for the
elements of the group $\Bbb Z$
regarded as a subgroup in $\textbf{Z}_2$). It is easy to see that the
transformation $P$ is measurable and even continuous in the weak
topology everywhere apart from the above-mentioned exceptions.

\begin{statement}
The Pascal automorphism preserves the Bernoulli measures
$\mu_p=\prod_1^{\infty}\{p,1-p\}$, $0<p<1$, on the
infinite-dimensional cube and is ergodic with respect to all
$\mu_p$, $0<p<1$.
\end{statement}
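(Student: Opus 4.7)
\emph{Invariance of $\mu_p$.} The key combinatorial fact, immediate from the construction of $\prec$, is that $x$ and $Px$ always differ in only finitely many coordinates, and those coordinates form the same multiset of zeros and ones; in particular $(x_1,\dots,x_n)$ and $((Px)_1,\dots,(Px)_n)$ have the same number of ones for every $n$ large enough to contain all the altered positions. I would leverage this through a Rokhlin-tower argument. Fix $n$ and put $[n,k]=\{x:x_1+\dots+x_n=k\}$; this set decomposes as the disjoint union of $\binom{n}{k}$ length-$n$ cylinders, each of $\mu_p$-measure $p^k(1-p)^{n-k}$. For $x\in[n,k]$ whose prefix is not the maximum $0^{n-k}1^k$ of $C_{n,k}$, the definition of $\prec$ forces $(Px)_i=x_i$ for $i>n$ while $((Px)_1,\dots,(Px)_n)$ is the successor of $(x_1,\dots,x_n)$ in $C_{n,k}$. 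Thus $P$, restricted to the complement of the exceptional set $E_n=\bigcup_{k=0}^n\{x:(x_1,\dots,x_n)=0^{n-k}1^k\}$, permutes cylinders of equal $\mu_p$-mass while fixing the tail, and so preserves $\mu_p$ there. Since
$$\mu_p(E_n)=\sum_{k=0}^n p^k(1-p)^{n-k}\to 0\quad\text{as } n\to\infty,$$
verifying $\mu_p(P^{-1}C)=\mu_p(C)$ on an arbitrary cylinder $C$ of length at most $n$ and letting $n\to\infty$ gives $P$-invariance of $\mu_p$ on the whole Borel $\sigma$-algebra.

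\emph{Ergodicity.} I would reduce ergodicity to the Hewitt--Savage zero-one law. The orbit partition of $P$ coincides with the partition of $\{0,1\}^\infty$ into exchangeable equivalence classes: two sequences are $\prec$-comparable exactly when there is an $N$ with $x_i=y_i$ for $i\ge N$ and $\sum_{i<N}x_i=\sum_{i<N}y_i$, and since two $0$-$1$ strings of equal length with the same number of ones are always permutations of each other, this is equivalent to $y=x\circ\pi^{-1}$ for some finitely supported permutation $\pi$ of $\Bbb N$. Any $P$-invariant measurable set is therefore saturated by the action of the group of finite permutations of coordinates, and so lies in the exchangeable $\sigma$-algebra, which is $\mu_p$-trivial by the Hewitt--Savage theorem applied to the i.i.d.\ sequence $(x_n)$. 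Hence $\mu_p$ is $P$-ergodic.

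\emph{Main obstacle.} Neither step is technically deep. The one place demanding a little care is the identification of $P$-orbits with exchangeable classes in the ergodicity argument, which is a matter of unwinding the successor rule (together with the by-now-routine discarding of the $\mu_p$-null set of points on which $P$ is undefined). Once this identification is in hand, Hewitt--Savage yields ergodicity immediately, and the Rokhlin-tower argument for invariance amounts to bookkeeping on cylinders.
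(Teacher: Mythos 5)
Your proposal is correct and takes essentially the same route as the paper: invariance is deduced from the fact that $P$ alters only a finite prefix by a permutation preserving the number of ones (so every exchangeable product measure $\mu_p$ is preserved), and ergodicity from the mod~$0$ coincidence of $P$-orbits with the orbits of the finite coordinate permutations together with the Hewitt--Savage $0$--$1$ law. Your cylinder-by-cylinder bookkeeping with the exceptional sets $E_n$ is just a fleshed-out version of the paper's one-line invariance argument.
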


The ergodicity follows from the Hewitt--Savage $0$-$1$ law, or,
alternatively, from the ergodicity of the action of the group
$S_{\infty}$ on the same space. For a more detailed analysis, see below.

\section{The Pascal automorphism as the result of a time change in the
odometer, and random substitutions on the group $\Bbb Z$}

Now we proceed to a more detailed study of the Pascal adic
automorphism. We will show that it is
measure-preserving. Indeed, taking the immediate successor of a point results in a substitution of finitely
many coordinates, so the measure-preserving property follows from the
fact that the Bernoulli measure with equal factors is invariant under
the action of the infinite symmetric group by substitutions of
coordinates. It is clear from above that every orbit of the Pascal
automorphism lies in one orbit of the odometer, namely, in the same
coset of $\Bbb Z$. Hence an element of the group of dyadic
integers of the form $x-Px$ lies in the subgroup
$\Bbb Z \subset \textbf{Z}_2 $, and, consequently, the Pascal
automorphism can be regarded as the result of a time change in the
odometer.

We emphasize again that the partition into the orbits of the
Pascal automorphism coincides
$\bmod 0$ with respect to any Bernoulli measure with the partition
into the orbits of the group of finite substitutions of positive integers,
and the set of Bernoulli measures coincides with the set of ergodic
invariant measures with respect to the Pascal automorphism.

If, using binary expansions, we identify (up to a set of zero measure)
the group $\textbf{Z}_2$ with the unit interval, then the Pascal
automorphism turns into a transformation of the interval which belongs
to the class of so-called
rational countable rearrangements.

Below we will write down explicit formulas for the time change
that should be made in the odometer in order to obtain the Pascal
automorphism. As we will see, the Pascal automorphism reorders the points
on cosets of the subgroup  $\Bbb Z$ (i.e., on orbits of the
odometer) in a quite complicated way.

The analysis below is similar to that made in
\cite{Mo} in a simpler case;
namely, a detailed comparison of the standard order with the
so-called {\it Morse order} arising from the study of the Morse automorphism.

\medskip
It is easy to deduce from the definition of the Pascal automorphism
that it is given by the following formula:
$$x\mapsto Px;\quad  P(0^m1^k\textbf{10}**)=1^k0^m\textbf{01}**, \quad m,k=0,1 \dots.$$
It is convenient to write the automorphism $P^{-1}$ in a similar form:
 $$P^{-1}(1^k 0^m\textbf{01}**)=0^m1^k\textbf{10}**,\quad m,k=0,1 \dots.$$

The passage from $P$ to $P^{-1}$ swaps $m$ and $k$, i.e., $0$ and $1$.
The automorphism $P$ and its inverse $P^{-1}$ are defined for all $x$
with infinitely many zeros and ones, i.e., on the set
$\textbf{Z}_2\setminus\Bbb Z$. On the other hand, since
$P(x)$ lies in the same coset as $x$, one may ask what is the difference
$P(x)-x$. We summarize the answer in the following lemma.

\begin{lemma}
The Pascal automorphism is given by the formula
 $$P(0^m1^k10**)=1^k0^{m+1}1**, \quad m,k\geq 0,$$
or, in the numerical representation,
$$P(2^{m+k}-2^m+r)=2^{m+k+1}+2^k-1+r, \quad m,k\geq 0, \;r
\in\operatorname{Ker} (\theta_{m+k+1});
$$
here $\theta_n$ is the homomorphism defined by the formula
$\theta_n:\textbf{Z}_2 \rightarrow \textbf{Z}_2 /Z_{2^n}$, and its
kernel consists of the sequences with the first $n$ coordinates equal to
zero.

Correspondingly, the time change that transforms the
odometer $T$ into the Pascal automorphism $P$ is given by the formula
$$Px=T^{n(x)}x\equiv x+n(x),$$
where $x=0^m1^k10**$ and
 $$ n(x)=P(x)-x=1^{\min(k,m)}0^{|m-k|}10^{\infty},$$
or, in numerical form,
$$n(2^{m+k+1}-2^m)=2^m+2^k-1.$$
\end{lemma}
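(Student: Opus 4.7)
The plan is to unpack the definition of $P$ as the immediate adic successor by finding the shortest prefix of $x$ that $P$ alters, computing the successor within that finite class, and then reading off both $Px$ and the increment $n(x)=Px-x$.

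First I would locate the minimal $n$ such that the prefix $x|_{[0,n)}$ of $x=0^m 1^k 1 0 **$ is not the maximum element of its adic class $C_{n,k'}$, where $k'$ is the number of ones in that prefix. By the description in Section 1.1 the maximum of $C_{n,k'}$ is $0^{n-k'}1^{k'}$. For $n\leq m$ the prefix is $0^n$, the unique element of $C_{n,0}$; for $m<n\leq m+k+1$ it equals $0^m 1^{n-m}$, which coincides with the maximum of $C_{n,n-m}$. Only at $n=m+k+2$ does the prefix $0^m 1^{k+1}0$ differ from its class maximum $0^{m+1}1^{k+1}$, so $P$ modifies precisely the first $m+k+2$ coordinates of $x$ and preserves the tail $**$.

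Second, inside $C_{m+k+2,\,k+1}$ I use the numerical description (definition 3): elements are ordered by their value as integers. The prefix $0^m 1^{k+1} 0$ corresponds to $2^{m+k+1}-2^m$, and its immediate successor is the smallest integer $>2^{m+k+1}-2^m$ of popcount $k+1$ in the first $m+k+2$ bits. Splitting on whether bit $m+k+1$ is zero or one: every such integer with bit $m+k+1$ equal to $0$ is at most $2^{m+k+1}-2^m$, while the smallest one with bit $m+k+1$ equal to $1$ places the remaining $k$ ones in positions $0,\dots,k-1$ and so equals $2^{m+k+1}+2^k-1$. In coordinate form this is $1^k 0^{m+1} 1$. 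Gluing this onto the untouched tail yields the symbolic identity $P(0^m 1^k 10 **)=1^k 0^{m+1} 1 **$ and the numerical shadow $P(2^{m+k+1}-2^m+r)=2^{m+k+1}+2^k-1+r$, the kernel condition on $r$ merely recording that it is supported strictly above the altered prefix.

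Finally, since the tails cancel,
\[
n(x)=P(x)-x=(2^{m+k+1}+2^k-1)-(2^{m+k+1}-2^m)=2^m+2^k-1.
\]
A short case split on $m\leq k$ versus $m\geq k$ rewrites this integer in coordinate form as $1^{\min(m,k)} 0^{|m-k|} 1 0^{\infty}$, completing the lemma. No step presents a real obstacle: the whole content is combinatorial bookkeeping, and the numerical interpretation in definition 3 reduces the successor computation to a ``next integer with the same popcount'' exercise restricted to the finite class $C_{m+k+2,k+1}$. The only point that requires care is consistent indexing of positions in $\mathbf{Z}_2$ when converting back and forth between the symbolic and numerical forms.
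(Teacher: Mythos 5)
Your argument is correct and is exactly the verification that the paper leaves to the reader: the paper offers no proof beyond the remark that the formulas are ``easy to deduce from the definition'' and that the two cases $m>k$, $m\le k$ give the coordinate form of $n(x)$, and your scan for the shortest non-maximal prefix followed by the ``next integer with the same number of ones'' computation inside $C_{m+k+2,k+1}$ is the intended direct deduction. One point worth flagging: the numerical identity you derive, $P(2^{m+k+1}-2^m+r)=2^{m+k+1}+2^k-1+r$ with $r$ supported strictly above the altered prefix (i.e.\ $r\in\operatorname{Ker}(\theta_{m+k+2})$), does not literally coincide with the display in the lemma, which reads $P(2^{m+k}-2^m+r)=2^{m+k+1}+2^k-1+r$, $r\in\operatorname{Ker}(\theta_{m+k+1})$; your version is the one consistent with the symbolic formula and with the final line $n(2^{m+k+1}-2^m)=2^m+2^k-1$ (e.g.\ the printed version would give $P(4)=6$ at $m=k=0$, $r=4$, whereas in fact $P(4)=8$), so the printed numerical line contains an off-by-one misprint that your derivation silently corrects rather than a gap in your proof. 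The only step you leave implicit is the standard adic fact that taking the class-successor at the minimal non-maximal level indeed yields the immediate successor globally (a comparable $y\succ x$ agreeing with $x$ beyond some $N\ge m+k+2$ would force, between the two heads, an integer of the same popcount in the first $m+k+2$ bits, which your snoob computation excludes); spelling this out takes one line and does not affect the correctness of the proposal.
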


These formulas define the Pascal automorphism for any element
$x \in \textbf{Z}_2$ whose expansion contains a fragment of the form
$10$ and a fragment of the form $01$,
i.e., for any dyadic integer that is not of the form
$0^m1^{\infty}$, $m \geq 0$ (a negative integer) and not of the form
$1^k0^{\infty}$ (a positive integer). The above formulas for
$n(x)$ are easy to verify in either of the cases
$m>k$ and $m\leq k$.

Now consider the functions
$n_k(x)$ defined by the formula $P^k x=T^{n_k(x)}x=n_k(x)+x$ for all positive integers $k=0,1,2 \dots$:
$$n_0(x)=0,\quad n_1(x)=n(x), \quad  n_k(x)= P^k x-x, \quad \dots.$$
A recurrence formula for $n_k(x)$ follows from the definition, as
described in the lemma below.

\begin{lemma}
$n_{k+1}=n_1(n_k(x)+x)+n_k(x)$.
\end{lemma}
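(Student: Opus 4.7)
The plan is to recognize the recurrence as the cocycle identity for $n_k$ viewed as a $\Bbb Z$-valued cocycle over the odometer, and to derive it formally from the semigroup identity $P^{k+1}=P\circ P^k$. Because every orbit of $P$ lies inside a single coset of $\Bbb Z\subset\textbf{Z}_2$ and the odometer translations $T^a$, $a\in\Bbb Z$, satisfy $T^aT^b=T^{a+b}$, no work beyond unfolding the definitions is required.

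Concretely, I would set $y:=P^{k}x=n_k(x)+x$. Applying $P$ once more and using the case $k=1$ of the defining relation $Py=T^{n_1(y)}y=n_1(y)+y$ at the point $y$, we obtain
\[
P^{k+1}x \;=\; Py \;=\; n_1(y)+y \;=\; n_1\bigl(n_k(x)+x\bigr)+n_k(x)+x.
\]
On the other hand, by the very definition of $n_{k+1}$, we have $P^{k+1}x=n_{k+1}(x)+x$. Subtracting $x$ in the group $\textbf{Z}_2$ (all quantities involved are integers, so the subtraction takes place inside the single coset $x+\Bbb Z$ and is unambiguous) yields the claimed identity $n_{k+1}(x)=n_1(n_k(x)+x)+n_k(x)$.

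The only point that requires attention is the domain of definition: $P$ and $P^{-1}$ fail precisely on the countable set $\Bbb Z\subset\textbf{Z}_2$, so for the manipulation $P^{k+1}x=P(P^kx)$ to be legitimate at $x$, the forward $P$-orbit of $x$ up to step $k$ must avoid $\Bbb Z$. This excludes only countably many points, so the recurrence holds everywhere outside a countable (hence $\mu_p$-null, for every $0<p<1$) set. I do not anticipate a genuine obstacle here: the lemma is essentially a bookkeeping identity, and its real content will come only when it is iterated to express $n_k(x)$ as an explicit sum of elementary jumps of the form $1^{\min(k,m)}0^{|m-k|}10^{\infty}$ coming from Lemma~1, which in turn feeds the combinatorial analysis of repeated occurrences of the supporting words $O(m,k)$.
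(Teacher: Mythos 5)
Your proof is correct and is essentially the paper's own argument: both unfold $P^{k+1}x=P(P^kx)$, apply the defining relation $Py=n_1(y)+y$ at $y=P^kx=x+n_k(x)$, and cancel $x$. The extra remark on the exceptional countable set is a harmless addition.
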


\begin{proof}
We have $P^{k+1}x \equiv T^{n_{k+1}(x)}x=n_{k+1}(x)+x$.
But
$P^{k+1}x=P(P^k)x=T^{n_1(P^kx)}P^kx$,
so that $P^{k+1}x=T^{n_1(P^kx)+n_k(x)}x$,
i.e., $x+n_{k+1}(x)=n_1(P^kx)+n_k(x)+x=n_1(x+n_k(x))+n_k(x)+x$.
\end{proof}

Thus
\begin{eqnarray*}
P^k x&=& n_1(n_{k-1}(x)+x) + n_{k-1}(x)=\dots \\&=& x + n_1(x) +
n_1(n_1(x)+x) +{\ldots} +n_1(n_{k-1}(x)+x).
\end{eqnarray*}

Observe that the formulas expressing the functions
$n_k(x)$, $k>1$, in terms of the function $n_1(x)=n(x)$ are, of
course, universal: they hold for a time change in an arbitrary
automorphism. We will use only the function
$n(\cdot)=n_1(\cdot)$.

The distinguished
orbit $\Bbb Z \subset \textbf{Z}_2 $ of the odometer
breaks into countably
many finite orbits of the Pascal automorphism;
namely, every positive integer $x\in \Bbb N$ belongs to the finite
orbit that ends at $2^s-1$ where $s$ is the number of ones in the
binary expansion of $x$; and every negative integer belongs to the
finite
orbit that begins at $-2^s+1$. All the other orbits of the
automorphism $P$,
regarded as linearly ordered sets, are of order type
$\Bbb Z$.

In connection with the formula for $Px$, an important question arises
which we have already mentioned above:
how do the cosets of
$\Bbb Z$, i.e., the orbits of the odometer, transform under the action
of the Pascal automorphism? We introduce the following substitution
on the set $\Bbb Z$ of all integers:
$$ \sigma_x:k\mapsto n(x+k), \quad k\in \Bbb Z.$$

Thus the Pascal automorphism determines a random (the
randomness parameter is  $x\in \textbf{Z}_2$) infinite substitution $\sigma_x$
that maps $\Bbb Z$ (as a countable set) to itself and has
infinitely many infinite cycles.
The image of the Bernoulli measure on $\textbf{Z}_2$ under the map
$x \mapsto \sigma_x$ is a measure on the group $S^\Bbb Z$ of all
infinite substitutions of $\Bbb Z$. It
differs substantially from the measure arising in a similar analysis
of the Morse transformation
\cite{Mo} (in that case, the measure is supported by one-cycle
substitutions);  the analysis of this measure
is of considerable interest and can be used in the study of the properties
of the Pascal automorphism.

A general principle says that every time change in a dynamical system
with invariant measure determines a measure on the group of infinite
substitutions of time, and the properties of this measure
allow one to derive conclusions about the system. It is this
observation that gives meaning to the statement that an action of a group with
invariant measure can be regarded as an action of a random
substitution on this group. But for this we should choose
a {\it reference action}, an initial dynamical system to make
a time change in.
In our case (the group $\textbf{Z}$), this reference action is that of the odometer.

\section{The Pascal automorphism and a $\boldsymbol\sigma$-finite
invariant measure}

It turns out that the Pascal automorphism (without mentioning either
this term or the link to the Pascal triangle) was defined and used in
1972 in the paper \cite{HIK} by Hajian, Ito, and Kakutani and
in 1976 in the paper \cite{Ka} by Kakutani.\footnote{I am grateful to Professor
Hajian for informing me about this paper after my talk about the
Pascal automorphism at the Northeastern University (Boston) in April
2011.} The authors of \cite{HIK} use the product (Bernoulli) measures
with nonequal probabilities
($(p,1-p)$, $0<p<1/2$) on the product of two-point sets; these
measures are invariant under the Pascal automorphism, but only
quasi-invariant under the odometer. Using the Radon--Nikodym cocycle,
the authors construct a new automorphism $R$ of the direct product
$\mathbf{Z}_2\times \mathbb{Z}$ with an $R$-invariant
$\sigma$-finite measure. This automorphism $R$ is also a special
automorphism over the base where the Pascal automorphism acts,
and the ceiling function coincides with the function
$n(x)$ defined above. Recall that, since $\int n(x)\,dm(x)=\infty$,
the global measure is $\sigma$-finite. The ergodicity of $R$ follows
from the ergodicity of the Pascal automorphism, i.e, from the $0$-$1$
law, or from the triviality of the ``substitutional''
$\sigma$-algebra. This example was the first to demonstrate that some
ergodic automorphisms with infinite measure may commute with
non-measure-preserving automorphisms; for automorphisms preserving a
finite measure, this cannot happen. It is essential that the orbit
partition of the Pascal automorphism in the natural representation is
a subpartition of the orbit partition of the odometer into
\textit{finitely many} parts. In the other paper
\cite{Ka}, the Pascal automorphism and the formula
$P(0^m1^k10*)=1^k0^{m+1}1*$ mentioned above were used in the study of a statistical
problem (the so-called Kakutani problem). We will return to this link
elsewhere and relate this problem to random walks on the group
$\mathbb{Z}$.

\section{The stationary model and encoding of the Pascal automorphism; the
Pascal measure}

Now we describe the action of the Pascal automorphism in more
traditional terms, namely, as the shift in the space of two-sided sequences of
zeros and ones (i.e., again in the two-sided
infinite-dimensional cube) equipped
with some shift-invariant measure. It is this representation
that will be used in what follows.

Consider a stationary model of the Pascal
automorphism.

\begin{definition}
{\rm Given $x\in \textbf{Z}_2$, define a new two-sided sequence
$y_n(x)$ of zeros and ones as follows:
$$y_n=(P^nx)_1, \quad n \in \Bbb Z;$$
here $(\cdot)_1$ is the first digit ($0$ or $1$) of the dyadic number in the
parentheses. Thus we have a map
$$S:\prod_1^{\infty}\{0,1\}\equiv\textbf{Z}_2 \rightarrow
Y=\textbf{2}^{\Bbb Z}=\prod_{-\infty}^{\infty}\{0,1\},$$
which is given by the formula
  $$\textbf{Z}_2 \ni x \mapsto Sx=y\equiv \{y_n\}_{n\in \Bbb Z}:\quad
  y_n(x)=(P^nx)_1,\; n\in \Bbb Z.$$

The map $S$ sends the Bernoulli measure $\nu$ on the
infinite-dimensional cube to some measure $S_*\nu\equiv \pi$ on another
(two-sided infinite-dimensional) cube
$Y=\prod_{-\infty}^{\infty}\{0,1\}$, which we will call the {\it
Pascal measure}.}
 \end{definition}

In terms of paths in the Pascal graph, the map $S$ can be described as
follows. Given such a
path, regarded as a sequence of vertices in the Pascal graph,
this  map associates with it  the
sequence of changes of the first edge in the course of the adic
evolution of the path.

\begin{theorem}
The partition of the space $\textbf{Z}_2$ into two sets according to
the value of the first coordinate is a (one-sided) generator of the
Pascal automorphism. In other words, almost every point is uniquely
determined by  the sequence of the first coordinates of
its images under the action of the positive powers of the Pascal
automorphism:
$$x \leftrightarrow \{(P^nx)_1\}_{n \in \Bbb N}$$
is a bijection for almost all $x \in \textbf{Z}_2$.
Thus $S$ is an isomorphism of measure spaces which
sends the Pascal automorphism $P$ of the space
$\textbf{Z}_2$ with the Bernoulli measure $\nu$ to the two-sided
(right) shift in the space
$Y=2^{\Bbb Z}$ with the stationary measure $\pi= S_*\nu$.
\end{theorem}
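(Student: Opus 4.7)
The statement splits into three parts: (i) $S$ is measurable and intertwines $P$ with the two-sided shift $\sigma$ on $Y$, i.e.\ $S\circ P=\sigma\circ S$; (ii) the first-coordinate partition $\alpha=\{\{x_1=0\},\{x_1=1\}\}$ is a one-sided generator, equivalently $x$ is determined $\nu$-a.s.\ by $(y_n(x))_{n\ge 0}$; (iii) consequently $S$ is a measure-space isomorphism $(\textbf{Z}_2,\nu,P)\to(Y,\pi,\sigma)$ with $\pi=S_*\nu$. The first and third parts are routine -- $(S(Px))_n=(P^{n+1}x)_1=(Sx)_{n+1}$, and measurability follows from the measurability of each $P^n$ -- so the whole content lies in (ii).

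My plan for (ii) is to reconstruct $x|_N$ from $(y_n)_{n\ge 0}$ for every $N$. Fix $N\ge 1$, let $k$ be the weight of $x|_N$, and let $j\in\{1,\dots,\binom{N}{k}\}$ be the adic position of $x|_N$ inside $C_{N,k}$. The formula of Lemma~1 shows that $P$ modifies exactly the first $m+k+2$ coordinates of the current point, where $m+k+2$ is the position of its initial ``$10$''-pattern; this index exceeds $N$ precisely when $x|_N$ coincides with the adic maximum $0^{N-k}1^k$ of $C_{N,k}$. Let $\tau_N(x)$ denote the first $n\ge 0$ at which $(P^nx)|_N$ equals this maximum. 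For $0\le n\le\tau_N(x)$ the iterate $P^nx$ agrees with $x$ past position $N$, so $(P^nx)|_N$ is the $(j+n)$-th element of $C_{N,k}$ in the adic order, and $\tau_N(x)=\binom{N}{k}-j$. By the very definition of the supporting word $O(N-k,k)$, the initial block $(y_0,\dots,y_{\tau_N(x)})$ of $Sx$ coincides with the suffix of $O(N-k,k)$ that begins at position $j$.

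The decisive combinatorial step is to show that distinct pairs $(k,j)$ (with the corresponding $N$) produce distinguishable suffixes of the respective supporting words, so that a sufficiently long prefix of $(y_n)_{n\ge 0}$ can be uniquely parsed to recover both $k$ and $j$, and hence $x|_N$. I would prove this by induction on $N$ from the recursive construction of $O(m,k)$, which splits canonically according to the value of the last coordinate. Together with the fact that $\tau_N(x)$ is a.s.\ finite and $\tau_N(x)\to\infty$ as $N\to\infty$ -- a consequence of $P$-ergodicity applied to the positive-measure event ``$x|_N$ is the adic maximum of its class'' -- this yields $x|_N\in\bigvee_{n\ge 0}P^{-n}\alpha$ for every $N$. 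Therefore $\alpha$ is a one-sided generator, $S$ is $\nu$-a.s.\ injective, and the remaining assertions of the theorem -- the shift-invariance and ergodicity of $\pi$, and the metric isomorphy of $S$ -- follow at once from the intertwining relation.

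The hardest step, I expect, is this uniqueness of parsing: one must verify that the self-similar block structure of the supporting words $O(m,k)$ permits an observer who sees only the forward sequence $(y_n)_{n\ge 0}$ to locate the overflow time $\tau_N(x)$ and thereby identify $(k,j)$ from the observed suffix. This combinatorial rigidity of the adic coding of the Pascal graph is the actual heart of the theorem; once it is established, the remainder of the proof is formal.
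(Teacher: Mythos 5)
Your reduction of the theorem to the generator property, and your description of the prefix dynamics, are correct: up to the overflow time $\tau_N(x)=\binom{N}{k}-j$ the length-$N$ prefix sweeps its class $C_{N,k}$ in adic order, so $(y_0,\dots,y_{\tau_N(x)})$ is the suffix of the supporting word beginning at position $j$. The genuine gap is the step you yourself call decisive, which is both left unproved and, as formulated, false. It is not true that distinct pairs $(k,j)$ give distinguishable suffixes: already for $N=2$, the prefix $(0,1)$ (the adic maximum of $C_{2,1}$, $j=2$, $\tau_2=0$) and the prefix $(0,0)$ (the singleton class $C_{2,0}$) produce the identical observed block $(0)$. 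In general the block seen up to time $\tau_N$ cannot determine $(k,j)$, and to disambiguate you must read past $\tau_N$; but beyond $\tau_N$ the symbols are no longer a function of $(k,j)$ (equivalently, of $x|_N$) --- they depend on the coordinates of $x$ beyond $N$, which is precisely what you are trying to reconstruct. So the lemma your plan needs is not a statement about suffixes of two supporting words, but a separation statement uniform over all possible tails, and the proposed induction ``on $N$ from the recursive construction of $O(m,k)$'' is not set up to deliver that. Since this is where the whole content of the theorem sits, the proposal as it stands does not prove it.

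For comparison, the paper argues by a direct two-point separation estimate rather than by parsing: if $x\neq x'$ first differ at coordinate $n$, then the sequences $\{(P^kx)_1\}$ and $\{(P^kx')_1\}$ already differ at some index $k<2^n$; injectivity of $S$ off the countable exceptional set (and hence the isomorphism statement) follows immediately. A repaired version of your argument would have to establish a claim of exactly this kind --- that two distinct prefixes cannot produce the same coding \emph{whatever} the common continuation --- which is stronger than distinguishability of supporting-word suffixes. Two smaller corrections: $\tau_N(x)$ is finite for every $x$ for the trivial reason that $\tau_N=\binom{N}{k}-j\le\binom{N}{k}$, so no ergodicity argument is needed there; and the condition $\tau_N(x)\to\infty$ plays no role in showing $x|_N\in\bigvee_{n\ge0}P^{-n}\alpha$, so invoking it only obscures the structure of the proof.
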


\begin{proof}
It is easy to see that two noncoinciding elements of
the group $\textbf{Z}_2$ whose first distinct digits have index $n$
generate sequences
$(P^kx)_1$ that have at least one noncoinciding digit with index less than
$2^n$.
\end{proof}

\medskip\noindent
{\bf Remarks. 1.} The same partition according to
the  first coordinate is obviously not a generator for the odometer.

\noindent
{\bf2.} The support of the Pascal measure is of great interest, and we
study it below.
In \cite{Me} (see also \cite{MM}) it is proved that the number of
cylinders in the image (the ``complexity of the Pascal
automorphism''), i.e., the number of words of length $n$, is
asymptotically equal to  $n^3/6$.
\medskip

\begin{definition}
{\rm The $S$-image of a point $x$ will be called its \textit{Pascal image}.
It is defined for all $x \in
\mathbf{Z}_2$ with infinitely many zeros and ones.}
\end{definition}

In order to study the Pascal
automorphism, it is convenient to parameterize dyadic integers (i.e., elements of
$\textbf{Z}_2$) with infinitely many zeros and ones
by sequences of pairs of positive integers
$(m_i(x), k_i(x))$ in the following way:
$$x=(0^{m_1(x)}1^{k_1(x)} \textbf{10} 0^{m_2(x)}1^{k_2(x)} \textbf{10} 0^{m_3(x)}1^{k_3(x)}\dots).$$

In other words, the numbers $m_i(x)\ge0$
(respectively, $k_i(x)\ge0$) are the lengths of the words
consisting of zeros (respectively, ones) between two
(the $(i-1)$th and the  $i$th) occurrences of the word ``$\textbf{10}$''
in the binary expansion of the number $x$. The sequence of pairs $(m_i(x), k_i(x))$
will be called the
{\it pair coordinates} of $x$.
Obviously, the ordinary
coordinates can be recovered from them in a trivial way.

It is clear that the vectors $(m_i(x),k_i(x))$, regarded as functions of $x$, form a sequence (in $i$) of
independent identically distributed random two-dimensional vectors,
with the distribution
$${\rm Pr}\{m_i=m,k_i=k\}=2^{-(m+k-2)}, \quad m,k=0,1,2 \dots,$$
for every $i$.

The map
$S:\textbf{Z}_2 \rightarrow Y=\prod_{-\infty}^{\infty}\{0,1\}$ defined
above can be
written in a more specific form. This leads to the notion of {\it
supporting words} introduced above.

Assume that the first pair coordinate of an element
$x \in \textbf{Z}_2$ is  $m_1(x)=m\geq 0$, $k_1(x)=k\geq 0$;
consider the elements $x,Px,P^2x, \dots P^sx$, with $s=C_{m+k+1}^m$, and
write down the first coordinates of these elements. We will obtain a
word of length $s$, which, by definition, is the beginning of the
$S$-image  of $x$.

\begin{lemma}
The first coordinates of the elements
$x,Px,P^2x,\dots,P^sx$, $s=C_{m+k+1}^m$, form the supporting word
$O(m+1,k)$ in the sense of the definition from \S\,1.
\end{lemma}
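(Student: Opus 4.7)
The plan is to use the adic description of $P$ as the lex successor on paths of the Pascal graph, so that iterating $P$ on $x$ amounts to moving through an interval of paths to a single vertex in lex order. Set $d=m+k+2$ and let $\xi=0^m 1^k 10$ denote the length-$d$ prefix of $x$. As long as the length-$d$ prefixes of $x,Px,P^2x,\dots$ remain inside the single set $C_{d,k+1}$, each step of $P$ simply advances the prefix by one position in the adic ordering of that set (the tail $\ast\ast$ is never touched), and the first coordinate of $P^j x$ equals the first coordinate of the corresponding element of $C_{d,k+1}$.

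First I would locate $\xi$ in the adic ordering of $C_{d,k+1}$ using the inductive splitting rule from \S 1. The set $C_{d,k+1}$ decomposes into the ``last-coordinate-$0$'' subblock, order-isomorphic to $C_{m+k+1,k+1}$, followed by the ``last-coordinate-$1$'' subblock, order-isomorphic to $C_{m+k+1,k}$. Deleting the trailing $0$ of $\xi$ gives $0^m 1^{k+1}$, which by the same rule is the greatest element of $C_{m+k+1,k+1}$; hence $\xi$ is the last element of the ``last-coordinate-$0$'' subblock, sitting at position $\binom{m+k+1}{m}$ in the full list. This is consistent with the already-derived formula $P\xi=1^k 0^{m+1}1$, which is precisely the smallest element of the ``last-coordinate-$1$'' subblock.

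Next, each subsequent application of $P$ advances the length-$d$ prefix by one step inside the ``last-coordinate-$1$'' subblock, until the maximal prefix $0^{m+1}1^{k+1}$ of $C_{d,k+1}$ is reached. Because a fixed $1$ appended at position $d$ does not affect first coordinates, the sequence of first coordinates produced along this stretch coincides with the first-coordinate sequence of the ordered list of $C_{m+k+1,k}$, which by the definition in \S 1 is exactly the supporting word $O(m+1,k)$. Prepending the first coordinate contributed by $\xi=P^0 x$ itself produces the claimed word, and this word is independent of the tail $\ast\ast$ because the iteration never escapes $C_{d,k+1}$.

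The main obstacle is the routine bookkeeping: verifying that the prescribed $s+1$ iterates remain inside $C_{d,k+1}$ (so the tail plays no role), and matching the paper's convention for $O(\cdot,\cdot)$ so that the length $s=C_{m+k+1}^m$ fits exactly with the count of positions from $\xi$ to the maximum of $C_{d,k+1}$, via the Pascal identity $\binom{m+k+2}{k+1}=\binom{m+k+1}{m}+\binom{m+k+1}{m+1}$. Once this is carried out, no further ideas are needed: the proof is a direct unpacking of the inductive adic order on $C_{d,k+1}$ together with the explicit formula for $P$.
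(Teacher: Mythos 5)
The paper states this lemma without proof, so there is no in-paper argument to compare against; your route --- splitting $C_{d,k+1}$, $d=m+k+2$, into the last-coordinate-$0$ block (a copy of $C_{m+k+1,k+1}$) followed by the last-coordinate-$1$ block (a copy of $C_{m+k+1,k}$), locating the length-$d$ prefix $\xi=0^m1^{k+1}0$ of $x$ as the top of the $0$-block, and reading off the first coordinates of the iterates along the $1$-block --- is clearly the intended argument, and its combinatorial core is correct. You also use the right reading of Definition~1 (the ``$k$ ones and $m+1$ zeros'' clause, the one consistent with the paper's example $O(4,3)$; the other clause has the indices swapped). The one step you assert rather than justify, namely that each application of $P$ replaces the length-$d$ prefix by its adic successor inside $C_{d,k+1}$ and never touches the tail while the prefix is not maximal, follows in one line from the formula $P(0^a1^b10\,{*})=1^b0^{a+1}1\,{*}$: for a non-maximal prefix the first occurrence of $10$ lies within the first $d$ coordinates.

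However, the ``routine bookkeeping'' you defer at the end cannot be carried out as promised, because the lemma's own indexing is off and your closing sentences inherit the slip. The number of steps from $\xi$ to the maximum $0^{m+1}1^{k+1}$ of $C_{d,k+1}$ is $\binom{m+k+2}{k+1}-\binom{m+k+1}{k+1}=\binom{m+k+1}{k}=\binom{m+k+1}{m+1}$, which is exactly the length of $O(m+1,k)$ but is \emph{not} the stated $s=\binom{m+k+1}{m}=\binom{m+k+1}{k+1}$ unless $m=k$ (the case of the paper's worked example, where $\binom{7}{3}=\binom{7}{4}$ masks the discrepancy); so the ``exact fit'' via the Pascal identity is not available for the literal statement. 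Likewise, ``prepending the first coordinate contributed by $\xi$'' does not ``produce the claimed word'': for $m>0$ that coordinate is $0$, while $O(m+1,k)$ begins with $1$ whenever $k\ge 1$; the first digit of $x$ is in fact the last letter of the \emph{previous} supporting word, which is why the paper's example begins ``with the last word of the previous fragment.'' The clean fix is to state and prove the corrected form that your argument actually yields: the first coordinates of $Px,P^2x,\dots,P^{t}x$ with $t=\binom{m+k+1}{k}$ form $O(m+1,k)$, the first digit of $x$ itself belonging to the preceding block. With that adjustment your proof is complete.
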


The rules describing the transformation of the pair coordinates under the Pascal
automorphism, i.e., the expressions for
$m_i(Px)$, $k_i(Px)$ in terms of $m_i(x)$, $k_i(x)$, are easy to
formulate; however, for our purposes, only the transformation rule for the first
coordinate is of importance.

The following recurrence rule for the transformation of the pair coordinates
can be checked straightforwardly:

if $m_1(x)=0$, then $m_1(Px)=\delta_{k_1(x)}$, $k_1(Px)=k_1(x)+1$;

if $m_1(x)>0$, then $m_1(Px)=(m_1(x)-1)\delta_{k_1(x)}$, $k_1(Px)=k_1(x)-1$.

However, it is most important to study the structure of supporting
words, whose concatenations form almost all orbits with
respect to the Pascal measure (i.e., the Pascal ensemble).

\medskip\noindent{\bf Example. The exotic sequence.}
Consider the $S$-image of a simplest
$(1/2,1/2)$-sequence, namely, of the point
$x=(10)^{\infty}\in
\mathbf{Z}_2$ (regarded as a real number, $x$ is equal to $2/3$).
The corresponding path in the Pascal graph is the central path passing
through the vertices with coordinates
$(n,[(n+1)/2])$, $n=0,1,\dots$:
$$
00\to 11\to 21\to 32\to\dots \to 43\to 54.
$$
Here is the beginning of the Pascal image of the point
$x=(10)^{\infty}$ (or the corresponding path in the Pascal graph):
\begin{multline*}
x=1010101010*\to Px= 01101010*\to \dots \\
\to P^5x=11000110* \to \dots \to P^{14}x=00011110*.
\end{multline*}

The corresponding concatenation of supporting words constituting the
sequence $Sx$ in the Pascal ensemble is
$$
Sx=O(0,0)O(1,0)O(2,1)O(3,2)O(4,3)\dots O(n,n+1)\dots\,.
$$
An explicit form of the supporting word
$O(4,3)$ is given below.

In other words, the Pascal ($S$-)image of this sequence is the
sequence of the first coordinates of the adically ordered vertices of
the middle
hyperplanes of the cubes whose dimensions successively increase by one,
$n=1,2,\dots$\,.

It is easier to describe this sequence as follows. Consider the set
$F_n$ of all positive integers whose binary expansion contains exactly
$2n+1$ digits (the highest digit is equal to $1$),
i.e., of all integers in the interval
$(2^{2n},2^{2n+1}-1)$, with the number of zeros equal to
$n+1$, i.e., exceeding the number of ones
(equal to $n$) by one; arrange these numbers in each set
$F_n$ according to the adic order, and then join them into a single
sequence $F_1, F_2,\dots$\,. We obtain a sequence of positive
integers, which starts as follows:
$$
F_0=\{0\},\quad F_1=\{1,2,4\},\quad F_2=\{3,5,6,9,10,12,17,18,20,24 \},\quad F_3= \dots,
$$
i.e.,
$$
0,1,2,4,3,5,6,9,10,12,17,18,20,24,\dots\,.
$$
Then the $S$-image of the point $x=(10)^{\infty}$ is the sequence of
the {\it parities} of these numbers; in our case,
$$
Sx= (0,1,0,0,1,1,0,1,0,0,1,0,0,0,\dots).
$$

\begin{definition}
{\rm The Pascal image $Sx$ of the point $x=(10)^{\infty}$ will be called the
\textit{exotic sequence of zeros and ones}.}
\end{definition}

Now we can easily find the Pascal image of a general element
$x \in \mathbf{Z}_2$, i.e., a general path in the Pascal graph. To
this end, for a given $x$  (i.e., a given path in the Pascal graph),
write down all indices
$r_i(x)$, $i=1,2,\dots$, for which $x_{r_i}x_{r_i+1}=10$. Set
$$
\sum_{t=1}^{r_i(x)} x_t=\bar k_i(x),\qquad \bar m_i(x)=r_i(x)-\bar k_i(x).
$$
It is not difficult to express $\bar m_i$, $\bar k_i$ in terms of
sums of the pair coordinates
$m_i(x)$, $k_i(x)$, $i=1,2,
\dots$\,. In the above example with $x=(10)^{\infty}$, we obviously have $r_i=2i-1$,
$\bar k_i=i-1$, $\bar m_i=i$.

\begin{theorem}
For $x \in \mathbf{Z}_2$, the image $Sx \in \prod_{-\infty}^{\infty} \{0;1\}$
is a sequence of concatenations of supporting words with monotonically
increasing indices. For positive indices, it looks as follows:
$$
O(\bar m_1(x), \bar k_1(x)) O(\bar m_2(x),\bar k_2(x)) \dots,
$$
where the parameters $\bar m_i$, $\bar k_i$ are defined above.

Thus the Pascal image $Sx$ of a point $x$ is the corresponding
sequence of the first coordinates (or the parities, in the numerical
interpretation). In particular,
for the point $x=(10)^{\infty}$ we obtain the exotic sequence.
\end{theorem}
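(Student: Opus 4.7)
The plan is to iterate Lemma~3. That lemma, applied at $x$ itself with first pair coordinate $(m_1, k_1)$, asserts that the first coordinates of the iterates $Px, P^2x, \ldots, P^{s_1}x$ with $s_1 = C_{m_1+k_1+1}^{m_1+1}$ form the supporting word $O(m_1+1, k_1) = O(\bar m_1, \bar k_1)$, providing the leftmost factor of the claimed concatenation.

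The inductive step rests on the following structural fact: after the $i$-th block, the point $P^{s_i}x$ has its initial segment of length $r_i+1$ equal to the maximal adic arrangement $0^{\bar m_i}\,1^{\bar k_i+1}$, while its coordinates beyond position $r_i+1$ still coincide with those of $x$. This follows by iterating the explicit formula $P(0^m 1^k 10**) = 1^k 0^{m+1} 1**$: each application of $P$ modifies only the bounded prefix containing the current first ``10'', the successive adic successors within that prefix traverse all arrangements with the same count of ones, and the block terminates at the unique maximal arrangement (which contains no interior ``10'' and therefore forces the next Pascal step to extend into a longer prefix). Granted this, the first pair coordinate of $P^{s_i}x$ is read off by splicing the maxed prefix with the tail $x_{r_i+2}\,x_{r_i+3}\,\ldots$, and Lemma~3 applied at $P^{s_i}x$ produces the $(i+1)$-st factor $O(\bar m_{i+1}, \bar k_{i+1})$; the induction closes.

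The main technical burden is the case analysis at the junction between the maxed prefix and the tail: when $m_{i+1}=0$ the tail's leading ones merge with the ones of the maxed prefix and defer the appearance of the next ``10'', whereas when $m_{i+1}>0$ the tail's leading zero immediately forms a new ``10'' with the final one of the prefix. In both cases one verifies that the resulting transition for $(\bar m_{i+1}, \bar k_{i+1})$ reproduces the definitions given just before the theorem in terms of the positions $r_i$ and the partial sums of $x$. As a sanity check, the case $x = (10)^\infty$ has $(m_j, k_j)=(0,0)$ for all $j$, so the recursion collapses to $(\bar m_i, \bar k_i) = (i, i-1)$ and the concatenation $O(1,0)\,O(2,1)\,O(3,2)\cdots$ is precisely the exotic sequence defined above.
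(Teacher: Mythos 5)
Your overall architecture is the right one, and it is in fact the argument the paper leaves implicit (the theorem is stated there without proof): decompose the orbit into blocks during which only a prefix of fixed length evolves, note that each block runs from the minimal arrangement $1^{\bar k}0^{\bar m}$ to the maximal one $0^{\bar m}1^{\bar k}$ so that its first coordinates spell exactly $O(\bar m,\bar k)$, and do a two-case analysis at the junctions. Your first block (starting at $Px$, with the off-by-one in Lemma~3 tacitly corrected) is right, and so is the qualitative description of the two junction cases. The genuine gap is in the inductive invariant: you tie the $i$-th block to the $i$-th occurrence of ``10'' in $x$ and claim that after the $i$-th block the maximal initial segment has length $r_i+1$ and that the junction transition ``reproduces the definitions given just before the theorem.'' This is false as soon as some $m_j>0$. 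In that case the new ``10'' formed by the final one of the maxed prefix and the tail's leading zero yields a next block whose prefix is only one symbol longer and whose $\bar k$ is unchanged; the next occurrence of ``10'' in $x$, at position $r_{i+1}$, is reached only after $m_j+1$ further blocks, each emitting its own supporting word. Concretely, for $x=1,0,0,1,1,0,1,0,1,0,\dots$ (pair coordinates $m_1=k_1=0$, $m_2=k_2=1$, $m_3=k_3=0,\dots$) the positive part of $Sx$ decomposes as $O(1,0)\,O(2,0)\,O(3,2)\,O(4,3)\cdots$: two consecutive words with $\bar k=0$ occur, the words are not indexed by the $r_i$, and after the second word the sorted segment has length $3$, not $r_2+1=6$. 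So the verification you defer would actually fail; and your only sanity check, $x=(10)^\infty$, is precisely the degenerate case $m_j\equiv 0$ in which the discrepancy cannot be seen.

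The repair (which also corrects the paper's own loose bookkeeping: its formulas $\bar k_i=\sum_{t\le r_i}x_t$, $\bar m_i=r_i-\bar k_i$ already contradict its stated values $\bar m_i=i$, $\bar k_i=i-1$ for the exotic point) is to run the induction on blocks, not on occurrences of ``10.'' The correct invariant is: at the end of block $i$ the point equals $0^{\bar m_i}1^{\bar k_i}1$ followed by the untouched coordinates $x_{\bar m_i+\bar k_i+2},x_{\bar m_i+\bar k_i+3},\dots$, where $(\bar m_i,\bar k_i)$ are the parameters of the word just emitted. If $a_i\ge 0$ denotes the length of the run of ones of $x$ beginning at position $\bar m_i+\bar k_i+2$, then the junction step $P(0^{\bar m_i}1^{\bar k_i+1+a_i}0\,u)=1^{\bar k_i+a_i}0^{\bar m_i+1}1\,u$ gives $\bar m_{i+1}=\bar m_i+1$, $\bar k_{i+1}=\bar k_i+a_i$, and restores the invariant; thus one supporting word is emitted per zero of $x$ consumed (so $\bar m$ grows deterministically by one per block, as in Lemma~5), not per factor ``10,'' and the positions $r_i$ form only a subset of the block endpoints. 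With this bookkeeping your induction closes and yields the intended decomposition, which reduces to $O(1,0)O(2,1)O(3,2)\cdots$ for the exotic point exactly as you computed.
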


The growth of the parameters of the supporting words
$O(\bar m, \bar k)$ is controlled by the following simple rule.

\begin{lemma}
The parameters $(\bar m_i,\bar k_i)$ of the current supporting word
$O(\bar m_i,\bar k_i$) can be expressed in terms of the parameters of
the preceding supporting word
$(\bar m_{i-1},k_{i-1})$ as follows:
$$
\bar m_i=\bar m_{i-1}+1,\qquad \bar k_i=\bar k_{i-1}+\delta_{m_i}k_i,
$$
where $m_i=m_i(x)$, $k_i=k_i(x)$ are the pair coordinates defined above
($\delta_t=1$ if $t=0$, and
$\delta_t=0$ if $t>0$).
\end{lemma}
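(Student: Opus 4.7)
The plan is to derive the recurrence by direct computation from the block decomposition of $x$ into pair coordinates, namely
$$x = 0^{m_1(x)}1^{k_1(x)}\mathbf{10}\,0^{m_2(x)}1^{k_2(x)}\mathbf{10}\,0^{m_3(x)}1^{k_3(x)}\mathbf{10}\cdots.$$
First I would verify that each marker $\mathbf{10}$ in this decomposition accounts for exactly one ``$10$'' substring in $x$: inside any block $0^{m_j}1^{k_j}$ the only digit transition is $0\to 1$; the junction between a marker and the subsequent block produces only $0\cdot 0$ or $0\cdot 1$; and the junction between a block and the next marker produces $0\cdot 1$ or $1\cdot 1$. This identifies $r_i(x)$ with the position of the first digit of the $i$-th marker and gives the telescoping identity
$$r_i - r_{i-1} = m_i + k_i + 2 \qquad (i\ge 2).$$

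Next I would compute $\bar k_i - \bar k_{i-1}$ by counting the $1$'s in the segment of $x$ strictly between positions $r_{i-1}$ and $r_i$; from the block structure these are precisely the $k_i$ ones contributed by $1^{k_i}$ together with one ``$1$'' contributed by the $i$-th marker itself. The companion recurrence for $\bar m_i$ then follows from the identity $\bar m_i = r_i - \bar k_i$, so that $\bar m_i - \bar m_{i-1}$ depends on $m_i$ in the same tally. A short case split on whether $m_i = 0$ (so that the $1^{k_i}$ block abuts the preceding marker with no zero digits between them) or $m_i>0$ rewrites the increment in the form involving the indicator $\delta_{m_i}$ stated in the lemma.

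The main obstacle is purely bookkeeping. One must align the index conventions --- whether $r_i$ refers to the ``$1$'' or the ``$0$'' of the $i$-th marker, what $\bar k_i$ includes at its right endpoint, and the initial values $\bar m_0, \bar k_0$ --- with those used in the preceding lemma, which fixes the length $C_{m+k+1}^m$ of each supporting word and hence the point at which the concatenation switches from $O(\bar m_{i-1}, \bar k_{i-1})$ to $O(\bar m_i, \bar k_i)$. Once these conventions are pinned down the identities reduce to elementary counting in the block decomposition, with no deeper combinatorial input required.
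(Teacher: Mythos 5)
You treat the lemma as a static digit count in $x$: one supporting word per marker, with $(\bar m_i,\bar k_i)$ read off from the prefix of $x$ up to the $i$-th occurrence of $10$. Carried out correctly, that count gives $\bar k_i-\bar k_{i-1}=k_i+1$ and $\bar m_i-\bar m_{i-1}=m_i+1$ (your own tally of the segment between consecutive markers says as much), and no case split on $m_i=0$ versus $m_i>0$ can convert the pair $(m_i+1,\,k_i+1)$ into $(1,\,\delta_{m_i}k_i)$: for $m_i>0$ your $\bar m$-increment is $m_i+1>1$, while the lemma asserts that $\bar m$ always advances by exactly $1$ --- the deterministic growth the paper emphasizes immediately after the statement. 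So the last step of your proposal, ``a short case split \dots rewrites the increment in the form involving the indicator $\delta_{m_i}$'', is not bookkeeping; it is exactly where the argument fails.

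The missing idea is that the lemma concerns consecutive supporting words in the concatenation $Sx$, and these are not in bijection with the markers of $x$: a block $0^{m_i}1^{k_i}\mathbf{10}$ is consumed over $m_i+1$ supporting words, not one, so a per-marker count computes aggregate increments rather than the per-word recurrence. One must follow the dynamics: $P(0^m1^k10\,{*})=1^k0^{m+1}1\,{*}$ puts the point at the bottom of the tower over level $m+k+1$, and the coding until the prefix reaches the maximal arrangement $0^{m+1}1^k$ spells the full word $O(m+1,k)$; at that moment the point is $0^{m+1}1^{k+1}0^{m'}1^{k'}\mathbf{10}\cdots$, and the position of the next occurrence of $10$ determines the next word: if $m'>0$ it sits at the junction, one zero of the block is absorbed, and the next word is $O(m+2,k)$ (zero $\bar k$-increment); if $m'=0$ it is the next marker, all of $1^{k'}$ together with the marker's $1$ is absorbed, and the next word is $O(m+2,\,k+k'+1)$. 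This derivation (and the paper's own exotic example, where all $m_i=k_i=0$ yet the words are $O(1,0)O(2,1)O(3,2)\cdots$) also shows that the nonzero $\bar k$-increment is $k'+1$ rather than literally $\delta_{m_i}k_i$, so the displayed formula needs its indexing and constant adjusted; but in any case a purely static count along $x$, with no analysis of how $P$ consumes the expansion, cannot produce the per-word recurrence that is the content of the lemma.
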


Recall that $m_i(x)$, $r_i(x)$ are independent (of each other and in
$i$) random variables with geometric distribution. Curiously, the coordinate $m$ (the first pair coordinate) grows
deterministically, increasing by one at each step,
$m\mapsto m+1$, while the second coordinate $k$ grows randomly, with
the mean value of the increment equal to $+1$.

Thus almost every sequence with respect to the Pascal measure is the
concatenation of supporting words
$O(m+1,k)$ of growing length constructed from an element
$x \in\mathbf{Z}_2$, i.e., a path in the Pascal graph; more exactly,
from its pair coordinates
$m_i(x)$, $k_i(x)$, $i\in\mathbb{Z}$. It suffices to study only the
positive part of the sequence (with $i>0$), since it allows us to make
a conclusion about the discreteness or continuity of the spectrum of
the Pascal automorphism. It is not difficult to prove that the
coordinates with negative indices are uniquely determined by the
coordinates with positive indices for almost all points with respect
to the Pascal measure.

\medskip\noindent{\bf An example of the dynamics of the Pascal automorphism.}
Consider an example of a fragment of an orbit of the Pascal
automorphism:
$$m_1=3,\;k_1=3,\; x=000111\textbf{10}.$$
The length of this fragment is equal to $C_{m_1+k_1+1}^{k_1}$,
but it is more convenient to begin it with the last word of the previous
fragment, so that the number of words we consider is equal to
$C_7^3=36$:
$x\to Px=11100001\to P^2x\to\dots\to P^{35}x=00001111$
(recall that we should find the first occurrence of the word
\textbf{10} and then apply the algorithm described above). We arrange the 36
successive images of the point $x$ in a $6\times 6$ table:
$$
\begin{array}{cccccc}
  00011110 & 11100001 & 11010001 & 10110001 & 01110001 & 11001001 \\
  10101001 & 01101001 & 10011001 & 01011001 & 00111001 &11000101 \\
  10100101 & 01100101 & 10011001 & 01010101 & 00110101 & 10001101\\
  01001101 & 00101101 & 00011101 & 11000011 & 10100011 & 10001101 \\
  10010011 & 01010011 & 00011101 & 10001011 & 01001011 & 00101011 \\
  00011011 & 10000111 & 01000111 & 00100111 & 00010111 & 00001111
\end{array}
$$

The sequence of the first digits of this fragment is
the supporting word
$O(4,3)$; see \S\,1.

The data in Table~1 shed some light on the distribution of
probabilities of cylinders of
lengths 6 to 10 with respect to the Pascal measure.\footnote{These computations were performed on my
request by the PhD students I.~E.~Manaev and A.~R.~Minabutdinov.}

\begin{table}
\caption{}
\begin{tabular}{|l|l|l|l|}
\hline
{\small Length of the word} & {\small Number of groups} & {\small
Cardinality of the groups}
&{\small Measure of the groups}\\
\hline
6 & 3   & 5 & 0,312484741\\
   &   &  12 & 0,374969482\\
 & & 20 & 0,312213898\\
\hline
7&  4&  2&  0,124998093\\
&   &   14& 0,437492371\\
&   &   16& 0,249990463\\
&   &   24& 0,187408447\\
\hline
8&  5&  1&  0,062498093\\
&   &   12& 0,374994278\\
&   &   19& 0,296865463\\
&   &   20& 0,156238556\\
&   &28&    0,109274864\\
\hline
9&  5&  10& 0,312494278\\
&   &   22& 0,343740463\\
&   &   24& 0,187488556\\
&   &   24& 0,093736649\\
&   &   32& 0,062393188\\
\hline
10& 6&8&    0,249994278\\
&   &   21& 0,328117371\\
&   &   28& 0,218738556\\
&   &   29& 0,113267899\\
&   &   28& 0,054672241\\
&   &   36& 0,03504467\\
\hline
\end{tabular}
\end{table}

\bigskip

One can observe that the cylinders are divided into several groups
such that inside each group the probabilities are equal. This
simplifies obtaining lower bounds on the growth of the scaling entropy
needed for proving the continuity of the spectrum of the Pascal
automorphism within
the entropy approach \cite{V} described below.

\section{Criteria for the continuity of the spectrum of an
automorphism}

We formulate several necessary and sufficient conditions for
the spectrum of an automorphism to be purely continuous; more exactly,
for the spectrum of the unitary operator
$U_P$ associated with the automorphism by the formula
$U_Pf(x)=f(Px)$ to be continuous in the
orthogonal complement of the constants in the space
$L^2(I^{\infty},\mu)$. This problem for the Pascal automorphism, along
with its definition, was suggested by the author \cite{V} in 1980
and subsequently considered in a series of papers
(e.g., \cite{PS,Me,MP,Ja,JaR}),
where various useful properties of the Pascal automorphism were
studied; however, the problem has not been solved up to now.

\subsection{Entropy approach}
The original plan for solving this problem was related to the scaling
entropy, average metrics, etc.\
(see \cite{V,{MM}}). The method suggested by the author for proving the continuity
of the spectrum  (see \cite{V}) relied on the following fact: the
spectrum is purely continuous if and only if
the result of averaging an arbitrary semimetric along an orbit
is a trivial (constant) metric, i.e.,
$$\lim_{n \to \infty} n^{-1}\sum_{s=0}^{n-1}\rho(P^sx, P^sx')={\rm
const}$$
for almost all pairs $x,x'$ with respect to the measure $\mu\times \mu$.

In particular, in \cite{V} the following theorem is proved. Let $T$ be
an automorphism of a Lebesgue space $(X,\mu)$. The
spectrum of the corresponding unitary operator $U_T$ in $L^2(X,\mu)$
is purely
continuous in the orthogonal complement of the constants
if and only if for every admissible semimetric for
which the limiting average metric is also admissible, the scaling sequence
for the entropy is bounded.

The assumption about the admissibility of the limiting average metric
is superfluous, because, as shown in \cite{V}, it holds for every admissible initial metric. Originally,
the admissibility of the limiting average metric was proved for the class of
compact
and bounded admissible semimetrics. On the other hand, in \cite{V}
it is proved that the average semimetric is constant if and only if
the scaling sequence for the entropy is unbounded, i.e., the
$\varepsilon$-entropy of the spaces obtained by successive averagings
tends to infinity. For the Hamming metric, a close result was earlier
proved in \cite{F}. Thus one might prove the continuity of the spectrum
by bounding the growth of the entropy of the
prelimit average metrics from below by some growing sequence. Moreover, it would
suffice to do this only for cut semimetrics, which have always been
regarded in
ergodic theory not as metrics, but rather as generating partitions.
Recall that a {\it cut semimetric} is a semimetric determined by a
finite partition of a measure space into measurable subsets
$\{A_i\}$, $i=1, \dots ,k$, by the following formula:
$\rho(x,y)=\delta_i(x)i(y)$, where $i(x)$ is the index of the set
$A_i$ that contains  $x$.

Summarizing, we can formulate the following criterion for the
continuity of the spectrum of an
automorphism.

\begin{theorem}
Given an automorphism $T$, the spectrum of the operator
$U_T$ in the orthogonal complement of the constants is continuous if
and only if any of the following equivalent conditions is satisfied:
\begin{enumerate}
\item[{\rm1.}] For an arbitrary cut semimetric
$\rho$, the limit of the average metrics  is a constant metric:
  $$\lim_n n^{-1}\sum_{k=0}^{n-1}\rho(T^kx,T^ky)={\rm const}
  \quad\mbox{a.e.}$$

\item[{\rm2.}]  For an arbitrary initial semimetric,
the $\varepsilon$-entropy of its averages
grows unboundedly.
\end{enumerate}
\end{theorem}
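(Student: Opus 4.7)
The plan is to reduce the theorem to the classical weak-mixing criterion — that $U_T$ has continuous spectrum in $L^2_0$ iff $T\times T$ is ergodic on $(X\times X,\mu\times\mu)$ — and then translate this into the cut-semimetric and entropy formulations via Birkhoff's theorem and the entropy dichotomy of \cite{V}.

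For $(0)\Leftrightarrow(1)$, I would apply Birkhoff's theorem to $T\times T$ and the $L^1(\mu\times\mu)$ function
$$
\rho_A(x,y)=\mathbf{1}_A(x)+\mathbf{1}_A(y)-2\,\mathbf{1}_A(x)\mathbf{1}_A(y),
$$
obtaining
$$
\frac{1}{n}\sum_{k=0}^{n-1}\rho_A(T^kx,T^ky)\xrightarrow[n\to\infty]{\text{a.e.}} E\bigl[\rho_A\mid\mathcal{I}_{T\times T}\bigr],
$$
where $\mathcal{I}_{T\times T}$ denotes the $\sigma$-algebra of $T\times T$-invariant sets. Ergodicity of $T$ collapses the one-variable terms to constants, so the limit is a.e.\ constant for every measurable $A$ iff $E[\mathbf{1}_A\otimes\mathbf{1}_A\mid\mathcal{I}_{T\times T}]=\mu(A)^2$ for every $A$. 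The polarization identity
$$
\mathbf{1}_A\otimes\mathbf{1}_B+\mathbf{1}_B\otimes\mathbf{1}_A=\mathbf{1}_{A\cup B}\otimes\mathbf{1}_{A\cup B}-\mathbf{1}_A\otimes\mathbf{1}_A-\mathbf{1}_B\otimes\mathbf{1}_B\quad(A\cap B=\varnothing)
$$
shows that the linear span of $\{\mathbf{1}_A\otimes\mathbf{1}_A\}$ is dense in the symmetric part $L^2_{\mathrm{sym}}(\mu\times\mu)$, so the hypothesis forces every symmetric element of $L^2_0(\mathcal{I}_{T\times T})$ to vanish. The antisymmetric part is then killed by the structural observation that an eigenfunction $g=u+iv$ of an ergodic $T$ has constant modulus, so the symmetric invariant $u\otimes u+v\otimes v$ attached to $g$ can be constant only when $g$ itself is constant (via $\|g(x)-g(y)\|^2=2|g|^2-2\langle g(x),g(y)\rangle$ on the unit circle in $\mathbb{R}^2$). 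Hence $\mathcal{I}_{T\times T}$ is trivial, $T$ is weakly mixing, and (0) holds; the converse $(0)\Rightarrow(1)$ is immediate from the same Birkhoff identity.

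For $(0)\Leftrightarrow(2)$, I would invoke directly the entropy dichotomy proved in \cite{V}: for every admissible semimetric $\rho$, the averages $\rho_n$ remain admissible and their $\varepsilon$-entropy $\mathcal{H}_\varepsilon(X,\rho_n,\mu)$ tends to $\infty$ as $n\to\infty$ iff the a.e.\ limit of $\rho_n$ is a constant semimetric. Combined with the extension of (1) from cut to all admissible semimetrics — obtained by the same Birkhoff argument applied to a general $\rho\in L^1(\mu\times\mu)$ — this closes the equivalence.

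The main obstacle is the coupling between symmetric and antisymmetric $T\times T$-invariants. Cut semimetrics are inherently symmetric and a priori only probe $L^2_{\mathrm{sym}}(\mathcal{I}_{T\times T})$; the theorem would fail if antisymmetric invariants could exist in isolation. The constant-modulus property of eigenfunctions of ergodic $T$ supplies exactly the needed tethering between antisymmetric and symmetric invariants, and once this point is settled the entropy half of the theorem reduces to citing \cite{V} and verifying that admissibility passes to $\rho_n$.
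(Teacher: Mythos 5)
Your argument is correct in substance, but it takes a genuinely different route from the paper. The paper gives no self-contained proof of this theorem: it is presented as a summary of results cited from \cite{V}, namely that pure continuity of the spectrum is equivalent to constancy of the limiting averaged metric for every admissible semimetric, and that this constancy is in turn equivalent to unbounded growth of the $\varepsilon$-entropy of the averaged metrics (with a related Hamming-metric result attributed to \cite{F}). You instead prove the spectral half directly: you identify continuity of the spectrum in the orthogonal complement of the constants with ergodicity of $T\times T$, apply the Birkhoff theorem to the cut semimetric $\rho_A$, reduce constancy of the limits to triviality of the symmetric part of the $T\times T$-invariant $\sigma$-algebra via polarization, and then exclude the remaining invariants through the constant-modulus property of eigenfunctions of an ergodic transformation (the two-point-support argument on the circle does force a nonconstant eigenfunction to yield a nonconstant symmetric invariant, so this bridge is sound). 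This buys an elementary, self-contained proof of the equivalence with condition 1 and makes explicit why symmetric (cut) semimetrics suffice, a point the paper leaves implicit; for condition 2 you cite the entropy dichotomy of \cite{V}, which is exactly what the paper itself does, so on that half the two treatments coincide in depth. One point to patch: you use ergodicity of $T$ twice (to collapse the one-variable Birkhoff terms and to get $|g|$ constant) without deriving it, and it is not a hypothesis of the theorem; it does, however, follow at once from condition 1 (a $T$-invariant set $A$ with $0<\mu(A)<1$ makes the averaged cut semimetric equal to $\rho_A$ itself, which is not a.e.\ constant) and from continuity of the spectrum (otherwise the eigenvalue $1$ appears in the orthogonal complement of the constants), so this is a one-line repair rather than a real gap.
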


We emphasize that the partition determining a cut metric in this
theorem is not at all assumed to be a generator.

\subsection{Besicovitch--Hamming almost periodicity and the NBH
property} Consider the following semimetric in the spaces of (one- or
two-sided) infinite sequences
$\{x_n\}_{n\in è\mathbb{N}}$ (or $\{x_n\}_{n\in è\mathbb{Z}}$) of symbols
$x_n\in A$ in a finite alphabet
$A$:
$$
\rho (x,y)=\liminf_{|n|\to \infty} \frac{1}{2n+1}\#\{k:|k|\le n,\,\{x_k \ne y_k\}\}
$$
for two-sided infinite sequences;
$$
\rho (x,y)=\liminf_{n \to \infty} \frac{1}{n}\#\{k:k=1,\dots,n,\,\{x_k \ne y_k\}\}
$$
for one-sided infinite sequences. It should be called
the Besicovitch--Hamming (BH) metric, since it is the limit
of the Besicovitch metrics
$B_p$ as $p\to\infty$, as well as the limit (as $n\to\infty)$ of the Hamming metrics on finite
sequences.

For any stationary (i.e., invariant under the one- or two-sided
shift) measure
$\mu$ on the space $A^{\mathbb{Z}}$ (or $A^{\mathbb{N}}$),
one can replace  $\liminf$  in the
definition of this metric
with $\lim$ for almost all sequences (by the ergodic theorem).

\begin{definition}
{\rm A sequence $\{x_n\}$ is called {\it Besicovitch--Hamming (BH) almost
periodic} if the set of its images under the (one- or two-sided) shift is
relatively compact in the BH semimetric.}
\end{definition}

Our sketch of the proof that the spectrum is not discrete relies on the
following well-known fact.

\begin{lemma}
The shift $S$ in the space $A^{\Bbb Z}$ of sequences in a finite
alphabet $A=\{1,2,\dots ,l\}$ with stationary (shift-invariant) measure
$\mu$ has a pure point spectrum if and only if
almost all realizations $\{x_n\}\in A^{\Bbb Z}$ (or $\{x_n\}\in A^{\Bbb N}$)
are BH almost
periodic $A$-valued functions on
$\Bbb Z$ (respectively, $\Bbb N$).
\end{lemma}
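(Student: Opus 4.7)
The plan is to establish an explicit identity converting the BH pseudo-metric on the shift orbit of a sequence into the $L^2$ distance between Koopman translates of the coordinate indicator functions $f_a(y) := \mathbf{1}_{\{y_0 = a\}}$, $a\in A$. This recasts BH almost periodicity as a spectral condition on the unitary operator $U f = f\circ S$ on $L^2(A^{\mathbb Z}, \mu)$, after which both directions reduce to the classical Koopman--von Neumann criterion for pure point spectrum. I may assume $\mu$ is ergodic, passing to ergodic components otherwise.

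The identity arises from Birkhoff's theorem applied to the cylinder $\{y:y_n\ne y_m\}$: for $\mu$-a.e.\ $x$,
\begin{equation*}
\rho_{BH}(S^n x, S^m x) = \lim_{K\to\infty} \frac{1}{K}\sum_{k=0}^{K-1}\mathbf{1}_{\{x_{k+n}\ne x_{k+m}\}} = \mu(\{y:y_n\ne y_m\}) = 1 - \sum_{a\in A}\langle U^n f_a, U^m f_a\rangle.
\end{equation*}
Since $U$ is unitary and $\sum_a\|f_a\|_{L^2}^2 = \sum_a\mu(\{y_0=a\}) = 1$, this rearranges to
\begin{equation*}
\rho_{BH}(S^n x, S^m x) = \tfrac{1}{2}\sum_{a\in A}\|U^n f_a - U^m f_a\|_{L^2(\mu)}^2.
\end{equation*}
Consequently the orbit $\{S^n x\}_{n\in\mathbb Z}$ is BH-precompact if and only if the joint Koopman orbit $\{(U^n f_a)_{a\in A}\}_{n\in\mathbb Z}$ is precompact in $\bigoplus_{a\in A} L^2(\mu)$, i.e., if and only if each single orbit $\{U^n f_a\}_{n\in\mathbb Z}$ is $L^2$-precompact (the alphabet being finite).

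The two implications of the lemma now follow cleanly. If $U$ has pure point spectrum then every $L^2$ function --- in particular each $f_a$ --- has $L^2$-precompact orbit (expand in eigenfunctions and truncate to a bounded set in a finite-dimensional subspace), so by the identity almost every realization is BH almost periodic. Conversely, if almost every realization is BH almost periodic then each $\{U^n f_a\}_n$ is $L^2$-precompact, hence by Koopman--von Neumann each $f_a$ lies in the pure point subspace $H_{pp}\subset L^2(\mu)$. Since $H_{pp}$ is an algebra under pointwise multiplication of bounded functions (products of eigenfunctions are eigenfunctions, with the product eigenvalue) and contains all shifts $U^k f_a$, it contains every cylinder indicator $\prod_j U^{i_j} f_{a_j}$; such indicators generate $L^2(\mu)$, so $H_{pp} = L^2(\mu)$ and $U$ has pure point spectrum.

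The one piece of bookkeeping requiring care is that the displayed identity must hold simultaneously for all pairs $(n,m)\in\mathbb Z^2$, which means invoking the ergodic theorem on a countable family of indicator functions and discarding a countable union of null sets so that a single full-measure set of $x$ serves for all $(n,m)$. This is routine, and no single step of the argument is deep; the substance lies in the elementary identity that converts the combinatorial BH distance into a Hilbert-space distance and thereby reduces the lemma to standard spectral theory.
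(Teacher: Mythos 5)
Your proof is correct in the ergodic setting (which is the one actually used in the paper), but it takes a genuinely different route from the paper's. The paper's argument is a one-liner in the ``only if'' direction: by von Neumann's discrete spectrum theorem a system with pure point spectrum is a translation on a compact abelian group with Haar measure, and the restriction of a bounded measurable function on such a group to the $\mathbb{Z}$-orbit of a point is BH almost periodic; the converse is treated as standard and is not spelled out. You bypass the representation theorem entirely: the identity $\rho_{BH}(S^nx,S^mx)=\tfrac12\sum_a\|U^nf_a-U^mf_a\|^2$, valid for a.e.\ $x$ simultaneously for all pairs $(n,m)$ by the ergodic theorem applied to the countably many sets $\{y:y_n\ne y_m\}$, converts BH precompactness of a.e.\ orbit into $L^2$-precompactness of the Koopman orbits of the coordinate indicators $f_a$; pure point spectrum gives precompact orbits in one direction, and in the other direction the Koopman--von Neumann splitting puts each $f_a$ in the pure point subspace, whose algebra property (products of bounded eigenfunctions are eigenfunctions) propagates this to all cylinder indicators $\prod_j U^{i_j}f_{a_j}$ and hence to all of $L^2(\mu)$. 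What your route buys is a self-contained proof of both implications at the level of $L^2$ geometry; what the paper's route buys is brevity and exactly the implication needed for its application (orbits not BH almost periodic a.e.\ forces a continuous spectral component). One caveat: drop the aside about ``passing to ergodic components otherwise.'' Ergodicity is genuinely needed, both for your identity (otherwise the orbit average is the conditional expectation on the invariant $\sigma$-algebra, not $\mu(y_n\ne y_m)$) and for the statement itself: for a non-ergodic stationary measure the ``if'' direction can fail --- e.g.\ the integral over $\alpha$ of the Sturmian measures with rotation number $\alpha$ has a.e.\ realization BH almost periodic, yet the global system has no eigenvalues besides $1$ --- so no reduction to ergodic components can restore the non-ergodic case; since the measures to which the lemma is applied are ergodic, this does not affect the substance of your argument.
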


Indeed, it suffices to apply von Neumann's discrete
spectrum theorem and observe that the restriction of any bounded measurable
function on a compact Abelian group to a countable
$\mathbb{Z}$-subgroup, regarded as a function on
$\mathbb{Z}$, is BH almost periodic.\footnote{It is worth mentioning
that there is a great amount of confusion in the terminology related to almost
periodic functions. For instance, by almost periodic sequences one
means (see \cite{Mu}) minimal or recurrent sequences, i.e., sequences $x$
such that every word occurring in $x$ occurs infinitely many times. But, in
general, such sequences are even not BH almost periodic, though the
BH condition should be considered as the weakest
almost periodicity condition. According to the tradition introduced by von
Neumann, almost periodicity is always related to the relative
compactness of the set of group translations.}

Since the BH almost periodicity of an orbit is equivalent to the
relative compactness of the set of translations of this orbit, in order to
establish the existence of a continuous component in the spectrum, one should verify that
the translations of almost every orbit are not compact. Thus the
procedure is to prove that {\it almost every, with respect to
a given stationary measure, orbit is bounded away from the periodic
orbits by a nonzero BH distance not depending on the length of the
period}. In fact, it suffices to prove that it is bounded away not
from all periodic sequences, but only from sequences whose periods are
arbitrary finite subwords of a given sequence.

Usually, it suffices to prove this property only for one typical orbit.

\medskip\noindent{\bf Example.} In order to prove the existence of a
continuous component in the spectrum of the Morse automorphism
(see \cite{Mo}), it suffices first to check that the distance between
the famous Morse--Thue sequence
$0110100110010110\dots$ (the fixed point of the Morse--Hedlund
substitution) and any periodic sequence is at least
$1/2$. Extending this fact to almost all, with respect to the Morse
measure, orbits presents no difficulties, since the structure of almost every orbit
is similar to that of the Morse--Thue sequence. Indeed, to obtain
almost every sequence
from the Morse--Thue sequence, one should make the change
(``fault'') $0\leftrightarrow 1$ at independent moments of time of the
form $2^{n(\omega)}$ with geometric distribution in $n$. Under this
operation, the bound on the distance discussed above between the
modified words and the periodic words remains the same.
This fact does not depend even on the distribution of the moments of
``faults.''

At the same time, the spectrum of the Morse automorphism has a
discrete component, which agrees with the fact that after identifying
every orbit with its ``antipode,''
$\{x_n\}\leftrightarrow \{\bar x_n\}$, $\bar
0=1$, $\bar 1=0$, the factor automorphism of the Morse automorphism
in the quotient space coincides with the odometer.
\medskip

To prove the pure continuity of the spectrum in our situation, one
needs to verify a more complicated property (which does not hold for
the Morse automorphism), namely, the
{\it uniform almost periodicity}.

Consider an arbitrary cylinder function
$f(\cdot)$, say $\{0;1\}$-valued, depending on finitely many
coordinates $x_1,\dots,x_k$ and all its translations $f(T^k\cdot)$,
$k\in\mathbb{Z}$. Divide the words of length $k$ into $m$ groups,
denoting them $b_1,\dots,b_m$, $b_i\in B$. A cylinder partition of the
space $A^\mathbb{Z}$ is a finite partition of $A^\mathbb{Z}$ whose
elements are unions of elementary cylinders (an elementary cylinder is defined
as the set of all sequences whose coordinates with given indices, say
$n=1,2,\dots,k$, are words belonging to some
$b_i$ for a fixed $i$).
A cylinder partition gives rise to a
cylinder factorization, i.e., a natural map from
$A^\mathbb{Z}$ to
$B^\mathbb{Z}$ commuting with the shift. If
$m>1$ (i.e., $B$ consists of at least two collections of words), the
cylinder factorization is called nontrivial.

\begin{definition}
{\rm An {\it NBH-sequence} is a sequence
$\{x_n\}$, $x_n \in A$, enjoying the following property: for every
cylinder function
$f=f(x_1,\dots,x_k)$ with zero integral, the sequence
$\{f(T^k\cdot)\}$, $k\in \mathbb{Z}$, is not BH almost periodic (in
$k$); of course, the sequence $\{x_n\}$ itself is not almost periodic
either.}
\end{definition}

\begin{theorem}
The shift $S$ in the space $A^\mathbb{Z}$ of sequences in a finite
alphabet $A=\{1,\dots,l\}$ with a stationary (shift-invariant) measure
$\mu$ has a purely continuous spectrum (in the orthogonal complement
of the constants) if and only if almost all realizations
$\{x_n\}\in A^\mathbb{Z}$ are NBH-sequences.
\end{theorem}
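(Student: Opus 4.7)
The plan is to route both directions of the equivalence through the preceding Lemma (which characterizes pure point spectrum for symbolic shifts by BH almost periodicity of a.e.\ orbit), applied to cylinder factors of $(A^{\mathbb{Z}},\mu,S)$. The key construction is that every cylinder function $f\colon A^{\mathbb{Z}}\to B$ with finite range $B$ defines a shift-equivariant measurable map $\pi_f(x)=(f(S^k x))_{k\in\mathbb{Z}}$ onto a stationary symbolic shift $(B^{\mathbb{Z}},(\pi_f)_*\mu)$, and the pulled-back $L^2$ embeds isometrically into $L^2(A^{\mathbb{Z}},\mu)$, preserving the constants and intertwining the shift operators. Consequently the cylinder factor inherits any spectral property formulated on the orthogonal complement of the constants.

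For the forward direction, assume $U_S$ has purely continuous spectrum in $L^2\ominus\mathrm{const}$. For any non-constant cylinder $f$ with $\int f\,d\mu=0$, the resulting factor on $B^{\mathbb{Z}}$ inherits the purely continuous spectrum and therefore is \emph{not} a pure point system. By the preceding Lemma, almost every orbit of the factor fails to be BH almost periodic, i.e.\ $(f(S^k x))_k$ is not BH almost periodic for $\mu$-a.e.\ $x$. Since there are only countably many cylinder partitions of $A^{\mathbb{Z}}$ (indexed by a window length $n$ and a partition of $A^n$), and the BH almost periodicity of $(f(S^k x))_k$ depends only on the level-set structure of $f$, we intersect countably many full-measure sets to conclude that the NBH condition holds $\mu$-a.s.

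For the converse, suppose toward a contradiction that $U_S$ has a non-trivial point component. Then the Kronecker factor $\pi\colon(A^{\mathbb{Z}},\mu,S)\to (K,\mu_K,R)$, a rotation on a compact abelian group, is non-trivial. Pick a non-trivial finite measurable partition $\xi=\{B_1,\ldots,B_N\}$ of $K$ and set $\phi=\sum_i i\cdot \mathbf{1}_{\pi^{-1}(B_i)}$. The symbolic factor through $\pi^{-1}(\xi)$ is a factor of the pure point system $(K,R)$ and therefore has pure point spectrum, so by the preceding Lemma $(\phi(S^k x))_k$ is BH almost periodic for $\mu$-a.e.\ $x$. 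Now approximate the (generally non-cylinder) partition $\pi^{-1}(\xi)$ by a cylinder partition $\eta=\{C_1,\ldots,C_N\}$ with $\sum_i \mu(C_i\triangle\pi^{-1}(B_i))<\epsilon$, which is possible because the cylinder $\sigma$-algebra generates the Borel $\sigma$-algebra. Setting $h=\sum_i i\cdot \mathbf{1}_{C_i}$, the ergodic theorem applied to $\mathbf{1}_{\{h\neq\phi\}}$ gives, for $\mu$-a.e.\ $x$ and every $m\in\mathbb{Z}$,
\[
\rho_{BH}\bigl((h(S^{k+m}x))_k,\,(\phi(S^{k+m}x))_k\bigr)=\mu\{y:h(y)\neq\phi(y)\}<\epsilon.
\]
Hence the shift-orbit of $(h(S^k x))_k$ lies within BH-distance $\epsilon$ of the BH-precompact shift-orbit of $(\phi(S^k x))_k$; it is therefore totally bounded, i.e.\ BH almost periodic. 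Subtracting $\int h\,d\mu$ yields a cylinder function with zero integral whose orbit is BH almost periodic, contradicting NBH.

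The main obstacle is the approximation step in the backward direction: the Kronecker factor is generically not a cylinder factor, so one must pass from a partition produced by spectral theory to a cylinder partition without destroying BH almost periodicity. The key is the displayed identity above, which via the ergodic theorem converts $L^1$-closeness of partitions into BH-closeness of the corresponding symbolic orbits; BH almost periodicity is then preserved by $\epsilon$-approximation because total boundedness is stable under passage to $\epsilon$-neighbourhoods. Modulo this observation, both directions are essentially formal consequences of the preceding Lemma applied to cylinder factors.
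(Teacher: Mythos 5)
Your forward direction is fine (modulo the standard remark that the set of $x$ for which a fixed cylinder sequence $(f(S^kx))_k$ is BH almost periodic is shift-invariant, so to pass from ``not almost every realization is BH almost periodic'' to ``almost every realization is not BH almost periodic'' you need ergodicity or a zero--one argument), and it is essentially the easy half that the paper also reads off from the Lemma. The genuine problem is in your converse. The step ``the shift-orbit of $(h(S^kx))_k$ lies within BH-distance $\epsilon$ of a BH-precompact set; it is therefore totally bounded, i.e.\ BH almost periodic'' is false: in the Besicovitch--Hamming pseudometric space the $\epsilon$-neighbourhood of a compact (or totally bounded) set is \emph{not} totally bounded --- balls in this space are not totally bounded, exactly as in an infinite-dimensional Banach space (already the $\epsilon$-ball around one sequence contains infinitely many sequences pairwise at distance of order $\epsilon$). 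From your approximation you only obtain finite $(\epsilon+\delta)$-nets for every $\delta>0$, i.e.\ total boundedness at scales larger than $\epsilon$; for the \emph{fixed} cylinder function $h=h_\epsilon$ this does not give BH almost periodicity, and letting $\epsilon\to 0$ changes $h_\epsilon$. So you never exhibit a single zero-integral cylinder function whose orbit sequence is BH almost periodic, which is what contradicting the NBH property requires; in effect you would need a nontrivial cylinder partition that is exactly (mod $0$) measurable with respect to the Kronecker factor, and approximation in measure alone does not produce one.

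This is precisely the difficulty the paper's argument is built to avoid. Instead of trying to violate NBH through an approximant, the paper works with the criterion of its Theorem 3 (constancy of the limiting time-averaged cut semimetric): there the roles of the objects are reversed --- the arbitrary finite partition is fixed, it is approximated in measure by cylinder partitions $\rho_\varepsilon$, and the inequality obtained by averaging and applying the ergodic theorem transfers the constancy of the limiting average from the cylinder semimetrics to the fixed one, because $\varepsilon$ can be sent to $0$ in the final estimate. Constancy of an averaged metric is stable under approximation in measure; BH almost periodicity of a fixed approximating sequence is a compactness property and is not. If you want to keep your Kronecker-factor route, you must either prove directly that some nontrivial cylinder partition is Kronecker-measurable (this is the real content, not a formal step), or replace the exact compactness argument by an averaged, approximation-stable criterion of the kind used in the paper.
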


\begin{proof}
It follows from the definition and the previous theorem that for every
cylinder factorization, the spectrum of the factor automorphism contains a continuous component.
We need to verify this for an arbitrary factorization, i.e., for a
factorization constructed from an arbitrary finite partition. We use
the following argument. First of all, any finite partition can be
approximated with arbitrary accuracy by cylinder partitions. In terms of
the corresponding {\it cut semimetrics}, this means that in
the topology of convergence in measure,
$$
(\mu\times\mu)\{(x,y):|\rho(x,y)-\rho'(x,y)|>\varepsilon\}<\varepsilon .
$$

The average distance between points for all semimetrics
$\rho_{\varepsilon}$ may be assumed fixed and equal to the average
distance $D$ with respect to the metric $\rho$.

Averaging these inequalities and applying the
ergodic theorem, we obtain
$$
(\mu\times\mu)\bigg\{(x,y):\lim_n\sum_{k=0}^{n-1}|\rho(T^kx,T^ky)-\rho'(T^kx,T^ky)|>\varepsilon
\bigg\}<\varepsilon(1-\varepsilon)+2D\varepsilon.
$$

Since the limiting average of
$\rho_{\varepsilon}$ is a constant metric, equal
to the average distance $D$ for almost all pairs, and since
$\varepsilon$ is arbitrary, the limiting average of
$\rho$ is also a constant metric. By the result mentioned above, this
means that the spectrum of the odometer $T$ is continuous.
\end{proof}

\subsection{The main lemma}
The main combinatorial property of almost every (with respect to the
Pascal measure) sequence, i.e., that of the parities of
concatenations of supporting words corresponding to almost all points of
$\mathbf{Z}_2$, is as follows.

\begin{lemma}
The Pascal image of almost every, with respect to the Bernoulli measure
$\mu_p$, $0<p<1$, point
$x\in\mathbf{Z}_2$ is a sequence of zeros
and ones enjoying the NBH property.
\end{lemma}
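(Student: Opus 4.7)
I would reduce the NBH assertion to a statement about non-approximation by periodic sequences in the Besicovitch--Hamming semimetric. A BH almost periodic sequence has a BH-precompact orbit and is therefore a BH-limit of purely periodic sequences; so it suffices to show that for every $\{0,1\}$-valued cylinder function $f$ depending on coordinates $x_1,\dots,x_\ell$ with $\int f\,d\mu_p=0$, every integer $q\ge 1$, and every $q$-periodic sequence $\pi$, one has
$$
\liminf_{N\to\infty}\frac{1}{2N+1}\#\bigl\{k:|k|\le N,\;f(T^k Sx)\ne \pi_k\bigr\}\ge c(f)>0
$$
uniformly in $q$ and $\pi$, for $\mu_p$-a.e.\ $x$.

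First I would invoke Theorem 4: almost every image $Sx$ is a concatenation of supporting words $O(\bar m_i(x),\bar k_i(x))$ with $\bar m_i\uparrow\infty$ and $\bar k_i$ increasing on average by one (Lemma 4). Their lengths $L_i=\binom{\bar m_i+\bar k_i}{\bar m_i}$ grow super-polynomially, so any candidate period $q$ is eventually much smaller than $L_i$. I would then isolate the long blocks in $Sx$ where a single supporting word $O(\bar m_i,\bar k_i)$ is ``written''; since the cumulative lengths $L_1+\dots+L_i$ are dominated by the last term $L_i$, it suffices to obtain the BH lower bound inside each such block for large $i$.

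The combinatorial heart of the argument is a recursive/self-similar structure of the supporting words coming from the adic order in \S\,1: splitting by the value of the last coordinate, $O(m+1,k)$ decomposes into two subwords whose lengths and internal structures are governed, respectively, by $O(m,k)$ and $O(m+1,k-1)$. Iterating this decomposition, each $O(\bar m_i,\bar k_i)$ contains at multiple nested scales many shifted copies of strictly smaller supporting words, and these copies sit at positions that are \emph{incompatible} with any single period $q<L_i$. The estimate I would have to prove---and which I expect to be the main obstacle---is a ``repetition--dispersion lemma'': for every cylinder $f$ with $\int f\,d\mu_p=0$, every period $q$, and every $q$-periodic $\pi$, the density of positions inside $O(\bar m_i,\bar k_i)$ at which $f$ agrees with $\pi$ is at most $1-c(f)$ once $L_i$ is sufficiently large compared with $q$ and $\ell$, with $c(f)$ independent of $q$. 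The point is that a better-than-$(1-c)$ agreement with a $q$-periodic pattern would force a nontrivial translation symmetry inside $O(\bar m_i,\bar k_i)$, while the asymmetry of the recursive split $O(m,k)$ versus $O(m+1,k-1)$ rules out any such symmetry at all large scales; the entropy data tabulated in Table 1 and the $n^3/6$ complexity bound of \cite{Me,MM} give concrete handles for proving this.

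Finally, I would promote the combinatorial estimate to an almost-sure BH lower bound. Invariance and ergodicity of $\mu_p$ under $P$ (Proposition 1) together with the Hewitt--Savage $0$-$1$ law let me apply the ergodic theorem along the sequence of supporting-word endpoints, yielding the required $\liminf$ bound for $\mu_p$-a.e.\ $x$. Since $\ell$ and $f$ are arbitrary, taking a countable union over all $\ell$, all cylinder functions on $\ell$ coordinates, and all periods $q$ (which is legitimate because the BH lower bound is uniform in $q$), I obtain the NBH property for a.e.\ $x$. The entire argument is driven by the combinatorial repetition--dispersion lemma; the measure-theoretic reductions are routine, but verifying that lemma with the correct uniformity in the period---so that a single $c(f)$ works for all $q$ simultaneously---is where the real work lies.
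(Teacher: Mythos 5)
Your plan is essentially the paper's own: the paper likewise reduces the lemma to the combinatorial structure of the concatenated supporting words (equivalently, to a single typical sequence such as the exotic one) and rests everything on the claim that long fragments of such a sequence are kept a universal BH distance away from periodic sequences whose periods are fragments of the sequence itself. Be aware, however, that the paper does not actually prove the lemma in this text: it explicitly defers the proof to another publication and records only this combinatorial claim as the intended core. Your ``repetition--dispersion lemma'' is precisely that deferred core, and you leave it unproven (as you candidly say, that is ``where the real work lies''), so what you have is a plan at the same level of completeness as the paper's sketch, not a proof, and there is no detailed argument in the paper against which its execution could be checked.

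One further caution concerning the reduction with which you open (and which the paper's informal ``procedure'' in \S\,5.2 also uses): being bounded away, uniformly in the period, from all periodic sequences does not by itself contradict BH almost periodicity. A Sturmian sequence coming from an irrational rotation has BH-precompact translates, hence is BH almost periodic, and yet stays at least $\min(\alpha,1-\alpha)$ away in the BH semimetric from every periodic sequence, whatever the period. So your inference ``BH almost periodic, therefore a BH-limit of purely periodic sequences'' is false in general; to make the reduction legitimate for the Pascal automorphism one needs an additional input (for instance, a restriction of the possible eigenvalues to roots of unity coming from the time-change-of-odometer structure), which neither your proposal nor the paper's sketch supplies.
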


The proof will be published elsewhere; its structure is identical for
almost all points of the type under consideration, so that it can be
reduced to proving the desired assertion
for only one sequence, e.g., for the sequence that
we have called exotic. In brief, everything is based on the fact that
every sufficiently long finite periodic sequence with period
composed from any fragment of the exotic (or similar) sequence is
bounded away in the BH metric from a sufficiently long fragment of
this sequence itself by a universal constant. This fact is of purely
combinatorial (or numerical) nature. In the simpler example considered
above, that of the
Morse automorphism, the proof of the non-almost
periodicity was based on a similar property.

\begin{corollary}
The Pascal automorphism (with respect to the image of any Bernoulli
measure) has a continuous spectrum in the orthogonal complement of the
constants.
\end{corollary}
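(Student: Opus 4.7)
My plan is to deduce the corollary by assembling the Main Lemma with the stationary model (Theorem~2) and the NBH spectral criterion (Theorem~4); no further combinatorics is required once the Main Lemma is in hand.

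First I would transfer the question to the stationary model. Let $\nu=\mu_p$ be an arbitrary Bernoulli measure on $\mathbf{Z}_2$, $0<p<1$. By Theorem~2 the map $S:\mathbf{Z}_2\to Y=\{0,1\}^{\mathbb{Z}}$ is a measure-space isomorphism intertwining the Pascal automorphism $P$ with the two-sided shift $\sigma$ on $Y$ and sending $\nu$ to the Pascal measure $\pi=S_{*}\nu$. Consequently the operators $U_P$ on $L^2(\mathbf{Z}_2,\nu)$ and $U_\sigma$ on $L^2(Y,\pi)$ are unitarily equivalent, the intertwiner being $f\mapsto f\circ S^{-1}$; spectral type is a unitary invariant, so it suffices to establish continuity of the spectrum of $U_\sigma$ in the orthogonal complement of the constants in $L^2(Y,\pi)$.

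Next I would invoke the NBH criterion. Theorem~4 says that the spectrum of the shift on $A^{\mathbb{Z}}$ with a stationary measure is purely continuous on the orthogonal complement of the constants if and only if almost every realization is an NBH-sequence. Applied to $(Y,\pi)$, this reduces the corollary to the statement that $\pi$-almost every $y\in Y$ has the NBH property. But $\pi=S_{*}\nu$, so this is exactly the assertion that for $\nu$-almost every $x\in\mathbf{Z}_2$ the Pascal image $Sx$ is NBH, which is the content of the Main Lemma. Putting the three steps together gives the continuity of the spectrum of $U_P$ on $L^2(\mathbf{Z}_2,\mu_p)^{\perp}$ for every $p\in(0,1)$.

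The main obstacle is not in this three-line assembly but in the Main Lemma itself, whose proof is deferred to a companion paper. The hard point there is the universal combinatorial estimate: any sufficiently long periodic word whose period is taken from any fragment of the exotic sequence (or of a typical Pascal image) must stay bounded away, in the Besicovitch--Hamming semimetric, from a sufficiently long initial segment of that sequence, by a constant independent of the length of the period and of the cylinder function used to project onto a finite alphabet. Once this is known for the exotic sequence, the passage to $\mu_p$-almost every point proceeds as in the Morse example discussed above: a typical orbit differs from the exotic one only by independent ``faults'' at times of geometric distribution, and such faults do not destroy the uniform BH lower bound. This is where all the genuine work lies; the spectral corollary, as shown, is then a formal consequence.
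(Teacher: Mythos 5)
Your proposal is correct and follows exactly the paper's intended route: the corollary is a formal consequence of the Main Lemma combined with the NBH criterion (Theorem~4) and the coding isomorphism $S$ onto the stationary model, with all genuine difficulty residing in the deferred proof of the Main Lemma, just as you note. (Only a labeling quibble: the isomorphism statement is the paper's Theorem~1, not Theorem~2.)
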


\medskip
Translated by N.~V.~Tsilevich.


\begin{thebibliography}{99}

\bibitem{A}
V. I. Arnold, {\it Experimental Observation of Mathematical Facts}
{\rm[in Russian]}. MCCME, Moscow, 2006.

\bibitem{Ar}
V.~Arnold,
Statistics of the symmetric group representations as a
natural science question on asymptotic of Young diagrams, in:
{\it Representation Theory, Dynamical Systems, and Asymptotic
Combinatorics},
Amer. Math. Soc. Transl. Ser. 2,
{\bf 217} (2006), pp.~1--8.

\bibitem{V81}
A.~M.~Vershik, Uniform algebraic approximations of shift and multiplication
operators,
{\it Sov. Math. Dokl.} {\bf 24},
97--100 (1981).

\bibitem{V82}
A.~M.~Vershik,
A theorem on periodical Markov approximation in ergodic theory,
{\it J. Sov. Math.} {\bf 28}, 667--674
(1985).

\bibitem{V4}
A.~M.~Vershik,
Dynamics of metrics in measure spaces and their asymptotic invariants,
{\it Markov Processes Related Fields} {\bf16}, No.~1,
169--185 (2010).

\bibitem{V}
A.~M.~Vershik, Scaling entropy and automorphisms with purely point
spectrum, {\it Algebra i Analiz} {\bf 23}, No.~1,
111--135 (2011).

\bibitem{HIK}
A.~Hajian, Y.~Ito, and S.~Kakutani,
Invariant measure and orbits of dissipative transformations,
{\it Adv. in Math.} {\bf 9}, No.~1,
52--65 (1972).

\bibitem{Ka} S.~Kakutani
A problem of equidistribution on the unit interval $[0,1]$,
{\it Lecture Notes Math.} {\bf 541},
369--375 (1976).

\bibitem{Ku}
A.~Kushnirenko, Metric invariants of entropy type, {\it Uspekhi Mat.
Nauk} {\bf 22}, No.~5(137), 57--65
(1967).

\bibitem{MM}
A.~A.~Lodkin, I.~E.~Manaev, and A.~R.~Minabutdinov,
Asymptotic behavior of the scaling sequence of the Pascal adic
transformation, {\it J. Math. Sci.}
{\bf 174}, No.~1,
28--35 (2011).

\bibitem{Mo}
A.~M.~Vershik,
Orbit theory, locally finite permutations and Morse arithmetic, in:
{\it Dynamical Numbers: Interplay between Dynamical Systems and Number
Theory}, Contemp. Math.
{\bf 532} (2010), pp.~115--136.

\bibitem{Ve}
A.~M.~Vershik and S.~V.~Kerov,
Combinatorial theory and the K-functor, {\it J. Sov. Math.}
{\bf 38},
1701--1733 (1987).

\bibitem{Mu}
An.~A.~Muchnik, Yu.~L.~Pritykin, and A.~L.~Semenov,
 Nearly periodic sequences, {\it Uspekhi Mat. Nauk}
{\bf 64}, No.~5(389),
21--96 (2009).

\bibitem{PS}
K.~Petersen and K.~Schmidt, Symmetric Gibbs measures,
{\it Trans. Amer. Math. Soc.} {\bf 349}, No.~7,
2775--2811 (1997).

\bibitem{F}
S.~Ferenczi,
Measure-theoretic complexity of ergodic systems,
{\it Israel Math. J.} {\bf 100},
180--207 (1997).

\bibitem{Me} X.~Mela,
Dynamical properties of the Pascal adic and related systems, PhD Thesis,
Univ. North Carolina at Chapel Hill, 2002.

\bibitem{Ja}
\'E.~Janvresse and T.~de la Rue,
The Pascal adic transformation is loosely Bernoulli, {\it
Ann. Inst. H. Poincar\'e (B), Prob. Statist.}
{\bf 40}, No.~2,
133--139 (2004).

\bibitem{MP}
X.~Mela and K.~Petersen,
Dynamical properties of the Pascal adic transformation, {\it
Ergodic Theory Dynam. Systems} {\bf 25},
No.~1, 227--256 (2005).

\bibitem{JaR}
\'E.~Janvresse, T.~de la Rue, and Y.~Velenik,
Self-similar corrections to the ergodic theorem for the
Pascal-adic transformation, {\it
Stoch. Dyn.} {\bf 5}, No.~1,
1--25 (2005).

\end{thebibliography}
\end{document}